\let\inf\relax \DeclareMathOperator*\inf{\vphantom{p}inf}
\let\max\relax \DeclareMathOperator*\max{\vphantom{p}max}
\let\min\relax \DeclareMathOperator*\min{\vphantom{p}min}
\numberwithin{equation}{section}
\numberwithin{figure}{section}
\newtheorem{theorem}{Theorem}[section]
\theoremstyle{plain}
\newtheorem{definition}[theorem]{Definition}
\newtheorem{lemma}[theorem]{Lemma}
\newtheorem{notation}[theorem]{Notation}
\newtheorem{proposition}[theorem]{Proposition}
\newtheorem{remark}[theorem]{Remark}
\newcommand{\be}{\begin{equation}}
\newcommand{\ee}{\end{equation}}
\def\rg{\rangle}
\def\lg{\langle}
\def\ds{\displaystyle}
\newcommand{\ep}{\varepsilon}
\newcommand{\De}{{\Delta}}
\newcommand{\RR}{{\mathbb{R}}}
\newcommand{\DD}{{\mathbb{D}}}
\newcommand{\EE}{{\mathbb{E}}}
\newcommand{\NN}{{\mathbb{N}}}
\newcommand{\PP}{{\mathbb{P}}}
\newcommand{\QQ}{{\mathbb{Q}}}
\newcommand{\CF}{{\mathcal{F}}}
\newcommand{\CG}{{\mathcal{G}}}
\newcommand{\CH}{{\mathcal{H}}}
\newcommand{\CA}{{\mathcal{A}}}
\newcommand{\CS}{{\mathcal{S}}}
\newcommand{\CL}{{\mathcal{L}}}
\newcommand{\CT}{{\mathcal{T}}}
\newcommand{\CB}{{\mathcal{B}}}
\newcommand{\CZ}{{\mathcal{Z}}}
\newcommand{\limitn}{{\underset{n\rightarrow \infty}{\longrightarrow}}}
\newcommand{\indic}{{\mathds{1}}}
\newcommand{\comment}[1]{}
\newcommand{\llb}{{\llbracket}}
\newcommand{\rrb}{{\rrbracket}}
\newcommand{\tr}{{\!\!~^\textrm{T}\!\!}}
\def\p{\vskip4truept \noindent}
\def\pp{\vskip4truept}
\title{Continuous-time limit of dynamic games with incomplete information and a more informed player.}
\author{Fabien Gensbittel\thanks{Toulouse School of Economics (GREMAQ, Universit\'{e} Toulouse 1 Capitole), 
 Manufacture des Tabacs, MF213, 21, All\'{e}e de Brienne 31015 Toulouse Cedex 6. E-mail:\href{mailto:fabien.gensbittel@tse-fr.eu}{fabien.gensbittel@tse-fr.eu}}}
\begin{document}
\maketitle

\begin{abstract}
We study a two-player, zero-sum, dynamic game with incomplete information where one of the players is more informed than his opponent. We analyze the limit value as the players play more and more frequently. The more informed player observes the realization of a Markov process $(X,Y)$ on which the payoffs depend, while the less informed player only observes $Y$ and his opponent's actions. We show the existence of a limit value as the time span between two consecutive stages goes to zero. This value is characterized through an auxiliary optimization problem and as the unique viscosity solution of a second order Hamilton-Jacobi equation with convexity constraints.
\end{abstract}
\noindent\textbf{Acknowledgments :} 
The author gratefully acknowledges the support of the Agence Nationale de la Recherche, under grant ANR JEUDY, ANR-10-BLAN 0112. The author is grateful to the editor and to an anonymous referee for carefully reading this work and making useful remarks.


\section{Introduction.}\label{sectionintroduction}
\pp
This paper contributes to the literature on zero-sum dynamic games with incomplete information, by considering the case where one player is always more informed than his opponent.
\pp
A key feature appearing in recent contributions to the field of zero-sum dynamic games is the interplay between discrete-time and continuous-time dynamic models, as in Cardaliaguet-Laraki-Sorin \cite{CLS}, Neyman \cite{neymancontinu} or Cardaliaguet-Rainer-Rosenberg-Vieille \cite{cardaetal}, where the authors consider sequences of discrete-time dynamic games in which the players play more and more frequently. 
Such an analysis is related to the study of a sequence of discretizations in time of a given continuous-time dynamic game. In the present work, we adopt this method  in order to study a continuous-time zero-sum dynamic game where one player is always more informed than his opponent and where the state variable evolves according to an exogenous Markov process.  
Precisely, we consider a model with two payoff-relevant variables $(X_t,Y_t)_{t\geq 0}$ which are evolving over time: $X$ is a Markov chain with finite state space and $Y$ is a diffusion process whose drift parameter depend on the current value of $X$. The process $X$ is privately observed by the more informed player (say player $1$) while $Y$ is publicly observed, allowing the less informed player (player $2$) to learn information about the variable $X$ during the game. We analyze the sequence of discrete-time games indexed by $n \geq 1$ with incomplete information and perfect observation of actions, where stages occur at times $\frac{q}{n}$ for $q \geq 0$. At each stage, player $1$ observes a pair of signals $(X_{\frac{q}{n}},Y_{\frac{q}{n}})$ while player $2$ only observes $Y_{\frac{q}{n}}$. The stage payoff function is assumed to depend on actions of both players and on $(X_{\frac{q}{n}},Y_{\frac{q}{n}})$. The global payoff is a discounted sum of the stage payoffs with discount factor $\lambda_n = \int_0^{1/n} re^{-rt}dt= 1- e^{-r/n}$, where $r>0$ is a given continuous-time discount rate. We assume that the stage payoffs are not observed and we study the limit value of these games as the players play more and more frequently.
\pp
We provide two characterizations for the limit value of these games as $n$ goes to infinity. The first one is a probabilistic representation formula where the optimization variable is the set of admissible belief processes for the less informed player. Such a formula already appears in Sorin \cite{sorinbook} as an illustration of the classical $Cav(u)$ theorem of Aumann and Maschler \cite{aumasch}. A similar discrete-time formula was introduced by De Meyer in \cite{demeyergeb} in order to obtain a continuous-time limit value in a class of financial games and this approach led to several extensions in continuous-time models (see Cardaliaguet-Rainer \cite{cardaexemple,carda12}, Gr\"{u}n \cite{grunstopping,grungirsanov}, Gensbittel \cite{fabiencavu,fabiencovariance}, and more recently Cardaliaguet-Rainer-Rosenberg-Vieille \cite{cardaetal} and Gensbittel-Gr\"{u}n \cite{GensbittelGrun}). This representation formula is important as it provides a characterization of optimal processes of revelation (martingales of posteriors induced by optimal strategies).
\pp
The second one is a variational characterization, the limit value is shown to be the unique viscosity solution of a second-order Hamilton-Jacobi equation with convexity constraints as introduced by Cardaliaguet \cite{cardadiff,cardadouble} and generalized in Cardaliaguet-Rainer  \cite{cardastochdiff}, Gr\"{u}n \cite{grunstopping}, Cardaliaguet-Rainer-Rosenberg-Vieille \cite{cardaetal} and Gensbittel-Gr\"{u}n \cite{GensbittelGrun}.
\section{Main results.}\label{sectionmainresults}

\begin{notation}
For any topological space $E$, $\De(E)$ denotes the set of Borel probability distributions on $E$ endowed with the weak topology and the associated Borel $\sigma$-algebra. $\delta_x$ denotes the Dirac measure on $x\in E$. Finite sets are endowed with the discrete topology and Cartesian products with the product topology. $\DD([0,\infty),E)$ denotes the set of c\`{a}dl\`{a}g trajectories taking values in $E$, endowed with the topology of convergence in Lebesgue measure. The notations $\lg,\rg$ and $|.|$ stand for the canonical scalar product and the associated norm in $\RR^m$.
\end{notation}

Let us at first describe the continuous-time game we will approximate. This description is incomplete as we do not define strategies in continuous-time. Rather, we define below strategies in the different time-discretizations of this game. The notion of value for this game will therefore be the limit value along a sequence of discretizations when the mesh of the corresponding partitions goes to zero. 
\p
We assume  that $(X_t)_{t \in [0,\infty)}$ is a continuous-time homogeneous Markov chain with finite state space $K$, infinitesimal generator $R=(R_{k,k'})_{k,k' \in K}$ and initial law $p\in \Delta(K)$. 
We identify $\Delta(K)$ with the canonical simplex of $\RR^{K}$, i.e.:
\[ \Delta(K)= \{ p\in \RR^K | \forall k\in K, p(k)\geq 0\,,\, \sum_{k\in K} p(k) =1 \}.\]
\p
Then, we define the real-valued process $(Y_t)_{t \in [0,\infty)}$ as  the unique solution of the following stochastic differential equation (SDE)
\be \label{SDE1}
\forall t\geq 0, \; Y_t= y + \int_0^t b(X_s,Y_s) ds + \int_0^t \sigma(Y_s) dW_s, 
\ee
where $(W_t)_{t \in [0,\infty)}$ is a standard Brownian motion independent of $X$ and $y \in \RR$ is a given initial condition. The process $Y$ may be seen as some noisy observation of the process $X$. 
\p
We assume that the functions $b$ and $\sigma$ in \eqref{SDE1} are bounded and Lipschitz, and that there exists $\epsilon>0$ such that for all $y \in \RR$, $\sigma(y)\geq \epsilon$. 
The state process $Z:=(X,Y)$ with values in $K\times \RR$ is a well defined Feller Markov process, with semi-group of transition probabilities denoted $(P_t)_{t\geq 0}$.
\p
Let $I,J$ denote finite action sets for the two players (players $1$ and $2$),  $g: (K \times \RR) \times  I \times J \rightarrow \RR$ a bounded payoff function which is Lipschitz with respect to the second variable, and $r>0$ a fixed discount rate.
\p
We consider the following (heuristic) zero-sum game, played on the time interval  $[0,\infty)$:
\begin{itemize}
\item Player $1$ observes the trajectory of $Z=(X,Y)$.
\item Player $2$ observes only the trajectory of $Y$.
\item They play the game $G(p,y)$ with total expected payoff for player $1$: 
\[ \EE[ \int_{0}^{+\infty} re^{-rt}g(X_t,Y_t,i_t,j_t)dt], \]
where $i_t$ (resp. $j_t$) denote the action of player $1$  at time $t$ (resp. of player $2$).
\item Actions are observed during the game (and potentially convey relevant information).
\end{itemize}
We aim at studying the value function of this game and how information is used by the more informed player when playing optimally. 
In order to achieve this goal, we introduce a sequence of time-dicretizations of the game. 
For simplicity, and without loss of generality, let us consider the uniform partition of $[0,+\infty)$ of mesh $1/n$. The corresponding discrete-time game, denoted $G_n(p,y)$ proceeds as follows: 
\begin{itemize}
\item The variable $Z_{\frac{q}{n}}=(X_{\frac{q}{n}},Y_{\frac{q}{n}})$ is observed by player $1$ before stage $q$ for $q\geq 0$.
\item The variable $Y_{\frac{q}{n}}$ is observed by player $2$ before stage $q$ for $q\geq 0$. 
\item At each stage, both players choose simultaneously a pair of actions $(i_q,j_q)\in I \times J$.
\item Chosen actions are observed after each stage. 
\item Stage payoff of player $1$ equals $g(Z_{\frac{q}{n}},i_q,j_q)$ (realized stage payoffs are not observed). 
\item The total expected payoff of player $1$ is
\[ \mathbb{E}\left[ \lambda_n \sum_{q \geq 0} (1-\lambda_n)^q g(Z_{\frac{q}{n}},i_q,j_q) \right],\]
with $\lambda_n=1-e^{-r/n}$
\end{itemize}
\begin{remark}
When $\sigma$ is constant and $b$ depends only on $X$, the observation of player 2 correspond to a normally distributed random variable with mean $\int_{\frac{q}{n}}^{\frac{q+1}{n}}b(X_{s})ds$ and variance $\frac{\sigma^2}{n}$. It may therefore be interpreted as a noisy observation of $X$.
\end{remark} 
The description of the game is common knowledge and we consider the game played in behavior strategies: at round $q$, player $1$ and player $2$ select simultaneously and independently an action $i_q\in I$ for player $1$ and $j_q\in J$ for player $2$ using some lotteries depending on their past observations. 
\p
Formally, a behavior strategy $\sigma$ for player $1$ is a sequence $(\sigma_{q})_{q\geq 0}$ of transition probabilities:
\[ \sigma_{q} :  ( (K \times \RR) \times I \times J )^{q}\times  (K \times \RR) \rightarrow \Delta(I), \]
where $\sigma_q(Z_{0},i_0,j_0,...,Z_{\frac{q-1}{n}},i_{q-1},j_{q-1},Z_{\frac{q}{n}})$ denotes the lottery used to select the action $i_q$ played at round $q$ by player $1$ when  past actions played during the game are $(i_0,j_0,...,i_{q-1},j_{q-1})$ and the sequence of observations of player $1$ is $(Z_{0},...,Z_{\frac{q}{n}})$. Let $\Sigma$ denote the set of behavior strategies for player $1$.
Similarly, a behavior strategy $\tau$ for player $2$  is a sequence $(\tau_{q})_{q\geq 0}$ of transition probabilities depending on his past observations
\[ \tau_{q} : (\RR \times I \times J )^{q} \times \RR \rightarrow \Delta(J). \]
Let $\CT$ denote the set of behavior strategies for player $2$.
\p
Let $\PP_{(n,p,y,\sigma,\tau)} \in \Delta(\DD([0,\infty),K\times \RR) \times  (I \times J)^\NN)$ denote the probability on the set of trajectories of $Z$ and actions induced by the strategies $\sigma,\tau$. 
The payoff function in $G_n(p,y)$ is defined by
\[ \gamma_n(\nu,\sigma,\tau):= \mathbb{E}_{\PP(n,p,y,\sigma,\tau)}\left[ \lambda_n \sum_{q \geq 0} (1-\lambda_n)^q g(Z_{\frac{q}{n}},i_q,j_q) \right] .\]
It is well known that the value of the game exists, i.e.
\[ V_n (p,y) := \underset{\sigma \in \Sigma}{\sup}\; \underset{\tau \in  \CT}{\inf} \; \gamma_n(p,y,\sigma,\tau)= \underset{\tau \in  \CT}{\inf} \; \underset{\sigma \in \Sigma}{\sup}\; \gamma_n(p,y,\sigma,\tau).  \]

We also need to consider  the value function $u$ of the non-revealing one-stage game $\Gamma(p,y)$, which is a finite game with payoff $g$ in which player $1$ cannot use his private information.
Precisely,
\[u(p,y) := \underset{\sigma \in \Delta(I)}{\sup} \; \underset{\tau \in \Delta(J)}{\inf} \; \sum_{i \in I}\sum_{j \in J}\sum_{k \in K} p(k)\sigma(i)\tau(j) g(k,y,i,j), \]
and the value exists (i.e. the $\sup$ and $\inf$ commute in the above formula) as it is a finite game. It follows from standard arguments that $u$ is Lipchitz in $(p,y)$.
\pp
The main results proved in sections \ref{sectioncontinuouslimit} and \ref{sectionvariational} are two different characterizations for the limit of the sequence of value functions $V_n$. 
\p
Let us now introduce some notations.
\p
\begin{notation} \ 
\begin{itemize}
\item The natural filtration $\CF^A$ of a process $(A_t)_{t\in [0,\infty)}$ is defined by $\CF^A_t= \sigma( A_s, s\leq t)$. 
The associated right-continuous filtration is denoted $\CF^{A,+}$ with $\CF^{A,+}_t:= \cap_{s >t} \CF^A_s$.
\item For any topological space $E$, $\DD( [0,\infty),E)$ denotes the set of $E$-valued c\`{a}dl\`{a}g trajectories.
\item For all $(p,y)  \in \Delta(K)\times \RR$, $\PP_{p,y} \in \Delta(\DD([0,+\infty),K\times \RR))$ denotes the law of the process $Z=(X,Y)$ with initial law $p\otimes \delta_y$.
\end{itemize}
\end{notation}
Our first main result is the following probabilistic characterization.
\begin{theorem}\label{cavu}
For all $(p,y)  \in \Delta(K)\times \RR$, 
\begin{equation}\label{eqcavu}
 V_n(p,y)  \limitn  V (p,y) := \underset{(Z_{t},\pi_t)_{t\geq 0} \in \CB(p,y)}{\max} \; \EE[ \int_0^{+\infty} re^{-rt}u(\pi_{t},Y_t)dt ],     
\end{equation}
 where $\CB(p,y)\subset \Delta(\DD([0,\infty), (K\times \RR) \times \Delta(K)))$ denotes the set of laws of c\`{a}dl\`{a}g processes $(Z_t,\pi_t)_{t\in [0,\infty)}$ such that:
\begin{itemize} 
\item $(Z_t)_{t\geq 0}$ has law $\PP_{p,y}$ and is an $\CF^{(Z,\pi)}$-Markov process.  
\item For all $t\geq 0$, for all $k \in K$, $\pi_t(k)= \PP(X_t=k |  \CF^{(\pi,Y)}_t )$.
\end{itemize}
\end{theorem}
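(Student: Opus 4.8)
The plan is to prove the convergence by establishing the two inequalities $\liminf_n V_n(p,y)\ge V(p,y)$ and $\limsup_n V_n(p,y)\le V(p,y)$, after first checking that the maximum defining $V(p,y)$ is attained. The two central tools are, on the one hand, the recursive (dynamic-programming) structure of the finite games $G_n(p,y)$ combined with the Aumann--Maschler splitting procedure, and on the other hand the compactness of families of laws of c\`{a}dl\`{a}g processes for the topology of convergence in measure. For well-posedness of $V$, note that $X$ takes values in the finite set $K$, that $\pi$ lives in the compact simplex $\Delta(K)$, and that $Y$ solves an SDE with bounded Lipschitz coefficients; hence the family of laws of $(Z_t,\pi_t)$ is tight and relatively compact, and since $u$ is bounded and Lipschitz the functional $(Z,\pi)\mapsto \EE[\int_0^{\infty} re^{-rt}u(\pi_t,Y_t)\,dt]$ is upper semicontinuous. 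It then remains to verify that $\CB(p,y)$ is closed, i.e. that the $\CF^{(Z,\pi)}$-Markov property of $Z$ and the filtering identity $\pi_t(k)=\PP(X_t=k\mid\CF^{(\pi,Y)}_t)$ pass to the limit; granting this closedness, the maximum is attained.

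For the lower bound I would fix a maximizer $(Z,\pi)\in\CB(p,y)$, discretize $\pi$ along the mesh-$1/n$ grid, and have player $1$ build a strategy in $G_n$ whose role, besides securing stage payoffs, is to \emph{reveal} information: at stage $q$, knowing $X_{\frac{q}{n}}$ and the current belief $\pi_{\frac{q}{n}}$, player $1$ draws his action so that, conditionally on player $2$'s observable past, the induced posterior moves from $\pi_{\frac{q}{n}}$ to a discretization of $\pi_{\frac{q+1}{n}}$. The existence of such a revelation is exactly the content of the splitting lemma, admissibility of $\pi$ guaranteeing that the prescribed posterior increments are achievable (with finitely many actions, general increments are realized up to an approximation handled by a density argument). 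Simultaneously player $1$ plays the non-revealing optimal strategy of $\Gamma(\pi_{\frac{q}{n}},Y_{\frac{q}{n}})$, which guarantees him at least $u(\pi_{\frac{q}{n}},Y_{\frac{q}{n}})$ at stage $q$. The resulting guarantee is the Riemann sum $\EE[\lambda_n\sum_q(1-\lambda_n)^q u(\pi_{\frac{q}{n}},Y_{\frac{q}{n}})]$, which converges to $\EE[\int_0^{\infty} re^{-rt}u(\pi_t,Y_t)\,dt]=V(p,y)$ by the c\`{a}dl\`{a}g regularity of $\pi$ and the continuity of $u$.

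For the upper bound I would fix any strategy of player $1$ in $G_n$ and let $\pi^n$ be the induced posterior process of player $2$. Since actions are observed, player $2$ can best-respond within each stage by playing the defensive optimal strategy of the one-stage game $\Gamma(\pi^n_{\frac{q}{n}},Y_{\frac{q}{n}})$, holding player $1$ to at most $u(\pi^n_{\frac{q}{n}},Y_{\frac{q}{n}})$ at stage $q$; using that $u$ is Lipschitz to control the posterior estimation, this yields $V_n(p,y)\le \EE[\lambda_n\sum_q(1-\lambda_n)^q u(\pi^n_{\frac{q}{n}},Y_{\frac{q}{n}})]+o(1)$. Interpolating $\pi^n$ into a c\`{a}dl\`{a}g process and invoking the same compactness as above, a subsequence of the laws of $(Z,\pi^n)$ converges to some $\mu$; the law of $Z$ is $\PP_{p,y}$ throughout, since $Z$ is exogenous, and verifying that the Markov and filtering constraints survive the limit shows $\mu\in\CB(p,y)$. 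Upper semicontinuity of the functional then gives $\limsup_n V_n(p,y)\le\EE[\int_0^{\infty} re^{-rt}u(\pi_t,Y_t)\,dt]\le V(p,y)$.

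I expect the main obstacle to be precisely this limit passage, namely the closedness of $\CB(p,y)$: conditional expectations and natural filtrations are not continuous under convergence in measure, so preserving both $\pi_t(k)=\PP(X_t=k\mid\CF^{(\pi,Y)}_t)$ and the $\CF^{(Z,\pi)}$-Markov property of $Z$ in the limit requires a careful martingale-problem characterization rather than a direct continuity statement. This single step underlies both the existence of the maximizer and the upper bound, and is where the bulk of the technical work will lie.
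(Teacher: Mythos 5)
Your overall architecture (attainment of the maximum, two inequalities, compactness of laws of c\`{a}dl\`{a}g processes under convergence in measure) matches the paper, and you correctly identify the closedness of $\CB(p,y)$ as the technical heart; the paper handles it exactly along the lines you anticipate, via Meyer--Zheng tightness, Skorokhod representation, passage to the limit of the filtering identity on finite cylinders along a co-null set of times, a backward martingale convergence lemma to recover all times through right-continuity, and Feller semigroup test-function identities for the Markov property. Your upper bound is also essentially the paper's: player $2$ plays $\tau^*(\hat{p}_q,Y_{\frac{q}{n}})$ stagewise, and the piecewise-constant posteriors are interpolated and passed to the limit. One caveat: your ``$o(1)$'' hides a genuine lemma. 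The gap between the belief player $2$ holds when choosing $j_q$ and the belief after observing $i_q$ is not controlled by Lipschitz continuity alone; the paper bounds $\sum_q \lambda_n(1-\lambda_n)^q\EE[|p_q-\hat{p}_q|_1]$ by $K(\lambda_n+\frac{2C}{n})^{1/2}$ using Cauchy--Schwarz and the bounded total quadratic variation of the posterior martingale (with a drift correction $\tilde{p}_{q+1}=e^{\frac{1}{n}\tr R}p_q$ since $X$ moves between stages). Without this martingale argument the error term does not visibly vanish.

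The genuine gap is in your lower bound, and it is structural rather than cosmetic. You ask player $1$'s stage action to do two incompatible things at once: to implement a splitting that moves player $2$'s posterior from $\pi_{\frac{q}{n}}$ toward $\pi_{\frac{q+1}{n}}$, and simultaneously to be the \emph{non-revealing} optimal mixed action of $\Gamma(\pi_{\frac{q}{n}},Y_{\frac{q}{n}})$. A non-revealing strategy is precisely one whose conditional law given $X_{\frac{q}{n}}=k$ does not depend on $k$, so it induces no splitting; conversely, a splitting action is revealing and forfeits the guarantee $u(\pi_{\frac{q}{n}},Y_{\frac{q}{n}})$ at that stage. Even a repaired version (alternating signalling and playing stages, or block constructions) faces two further obstacles you defer to ``a density argument'': with $|I|$ actions a single stage splits the posterior into at most $|I|$ atoms, while admissible $\pi$ may have increments with diffuse conditional laws; and the increments of an admissible $\pi$ are entangled with the Bayesian updating coming from the public observation of $Y$ (this is the content of the filtering constraint), so they cannot be freely prescribed through actions. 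The paper avoids all of this with one key observation you are missing: since $Z$ is an $\CF^{(Z,\pi)}$-Markov process, the discretized $(\pi_{\frac{q}{n}})_q$ is non-anticipative with respect to $(Z_{\frac{q}{n}})_q$, so by the representation lemma (Lemma \ref{representation}) player $1$ can \emph{privately simulate} $\pi$ as a function of his observations of $Z$ and independent uniform random variables, and then simply play $\sigma^*(\pi_{\frac{q}{n}},Y_{\frac{q}{n}})$. Player $2$'s actual posterior never needs to equal $\pi$: conditioning on the $\sigma$-field generated by $(\pi,Y)$ and past actions, the law of $X_{\frac{q}{n}}$ is $\pi_{\frac{q}{n}}$, so the stage payoff against any $\tau$ is at least $\EE[u(\pi_{\frac{q}{n}},Y_{\frac{q}{n}})]$ by the optimality of $\sigma^*$ in the averaged game. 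This conditioning argument, not belief manipulation via actions, is what makes the lower bound a few lines long; as written, your construction would fail at the first stage where the splitting and non-revealing requirements collide.
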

\p
Let us comment briefly this result. We generalize here the idea that the problem the informed player is facing can be decomposed into two parts: at first he may decide how information will be used during the whole game, and then maximize his payoff under this constraint. To apply this method of decomposition, we need to identify precisely the set $\CB(p,y)$ of achievable processes of posterior beliefs on $X$ of the less informed player. The filtration $\CF^{(\pi,Y)}$ represents the information of player $2$, which observes the process $Y$ (a lower bound on information). The condition that $Z$ is $\CF^{(Z,\pi)}$-Markov reflects the fact that player $2$ cannot learn any information on the process $X$ which is not known by player $1$ (an upper bound on information) and the second condition simply says that $\pi$ represents the process of beliefs of player $2$ on $X_t$. Maximizers of the right-hand side of equation \eqref{eqcavu} represent optimal processes of revelation for the informed player and induce asymptotically optimal strategies for the informed player in the sequence of discretized games (see the proof of Theorem \ref{cavu}).
\pp
We now turn to the second characterization. Define $b(y):=(b(k,y))_{k \in K} \in \RR^K$ and for all $k \in K$ and $t\geq 0$, define the optional projection\footnote{In all the proofs, we consider only natural or right-continuous filtrations, but we adopt the same convention as in \cite{JacodShiryaev} and do not complete the filtrations to avoid complex or ambiguous notations. Note that optional projections of c\`{a}dl\`{a}g processes are well-defined and have almost surely c\`{a}dl\`{a}g paths (see appendix 1 in \cite{dellacheriemeyer}).}:
\[\chi_t(k):=\PP(X_t=k | \CF^{Y,+}_t).\]
Using Theorem 9.1 in \cite{LipsterShiryaev} (see also the Note p.360 about the Markov property), the process $\psi:=(\chi,Y)$ with values in $\RR^{K}\times \RR$ is a diffusion process satisfying the following stochastic differential equation:
\begin{equation}\label{sde}
\forall t\geq 0,\; \psi_t= \psi_0+ \int_0^t c(\psi_s)ds + \int_0^t \kappa(\psi_s) d\bar{W}_s,
\end{equation} 
where $\bar{W}$ is a standard $\CF^{Y,+}$-Brownian motion and the vectors $c(p,y)$ and $\kappa(p,y)$ in $\RR^{K+1}=\RR^{K}\times \RR$ are defined by 
\[ c(p,y):=(\tr R p, \langle p,b(y) \rangle),\]
\[\kappa(p,y):=( (\frac{p_{k}}{\sigma(y)}(b(k,y)- \langle b(y),p \rangle)  )_{k\in K} , \sigma(y)),\] 
where $\tr R$ denotes the transpose of the matrix $R$ and probabilities are seen as column vectors. We deduce from standard properties of diffusion processes that for any function $f \in C^{2}(\RR^{K+1})$ with polynomial growth (say) and for all $0\leq s\leq t$:
\begin{align*}
\EE[ f(\psi_t) | \CF^{\psi}_s ] &= f(\psi_s)+ \EE[\int_s^t A(f)(\psi_u) du | \CF^{\psi}_s ]
\end{align*}  
where $A(f)$ is the differential operator defined by (using the notation $z=(p,y)$)  
\[ Af(z) = \langle D f (z), c(z) \rangle + \frac{1}{2} \langle \kappa(z), D^2 f(z) \kappa(z) \rangle.  \]
\pp
In order to state our second main result, we need to define precisely the notion of weak solution we will use. 
Let $p \in \Delta(K)$, we define the tangent space at $p$ by 
\[ T_{\Delta(K)}(p):= \{ x \in \RR^{K} \,|\, \exists \varepsilon>0, p+\varepsilon x,p-\varepsilon x \in \Delta(K) \}.\]
Let $\CS^m$ denote the set of symmetric matrices of size $m$.
For $S \in \CS^K$ and $p\in \Delta(K)$, we define 
\[ \lambda_{\max}(p,S):= \max \left\{ \frac{\langle x, Sx \rangle}{\langle x,x\rangle} \,|\, x \in T_{\Delta(K)}(p)\setminus \{0\} \right\}\]
and by convention $\lambda_{\max}(p,X)=-\infty$ whenever $T_{\Delta(K)}(p)=\{0\}$.
\begin{theorem}\label{variational2}
$V$ is the unique continuous viscosity solution of 
\begin{equation}\label{eqpde2}
\min\{rV +H(z,DV(z),D^2V(z))  \,;\, -\lambda_{\max} (p,D_p^2V(z)) \}=0 
\end{equation}
where for all $(z,\xi,S) \in (\Delta(K)\times \RR)\times \RR^{K+1}\times \CS^{K+1}$:
\[  H(z,\xi,S):=- \langle \xi, c(z) \rangle -\frac{1}{2} \langle \kappa(z), S \kappa(z) \rangle  - ru(z),\]
and where $DV,D^2V$ denote the gradient and the Hessian matrix of $V$ and $D^2_pV(z)$ the Hessian matrix of the function $V$ with respect to the variable $p$.
\end{theorem}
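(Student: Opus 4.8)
The plan is to prove the two assertions separately: first that the value $V$ of Theorem \ref{cavu} is a viscosity solution of \eqref{eqpde2}, and then that \eqref{eqpde2} has at most one bounded continuous solution via a comparison principle. The key preliminary step is to extract from the representation \eqref{eqcavu} a dynamic programming principle of the form
\[ V(p,y)=\sup_{(Z,\pi)\in\CB(p,y)}\EE\Big[\int_0^h re^{-rt}u(\pi_t,Y_t)\,dt+e^{-rh}V(\pi_h,Y_h)\Big],\quad h>0, \]
together with two structural facts about $\CB(p,y)$. The first is that every admissible belief process $\pi$ is a semimartingale with drift $\tr R\,\pi_t$, whose martingale part has predictable bracket lying in the tangent directions $T_{\Delta(K)}(\pi_t)$, bounded below by the filtering bracket encoded in $\kappa$ via \eqref{sde} and otherwise freely increasable by additional revelation. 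The second is that $V$ is concave in $p$, by the usual splitting argument: a combination $p=\sum_m\alpha_m p_m$ is realized by letting player $1$ privately randomize the initial law, producing an element of $\CB(p,y)$ of value $\sum_m\alpha_m V(p_m,y)$, so $V(p,y)\ge\sum_m\alpha_m V(p_m,y)$. I would also record that $V$ is bounded and Lipschitz, from the Lipschitz regularity of $u$ and stability of the flow \eqref{SDE1}; this regularity is needed for the viscosity formulation and for comparison.

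For the supersolution property, let $\phi$ be smooth with $V-\phi$ attaining a local minimum at $(p_0,y_0)$. Concavity of $V(\cdot,y_0)$ combined with $V\ge\phi+\mathrm{const}$ forces $\langle x,D_p^2\phi(p_0,y_0)x\rangle\le 0$ for every $x\in T_{\Delta(K)}(p_0)$, hence $-\lambda_{\max}(p_0,D_p^2\phi(p_0,y_0))\ge 0$. For the second branch I would insert the non-revealing process $\psi=(\chi,Y)\in\CB(p,y)$, whose generator is exactly $A$, into the lower-bound ($\ge$) side of the dynamic programming principle; an Itô expansion and the limit $h\downarrow 0$ yield $rV-AV-ru\ge 0$, that is $r\phi+H(\cdot,D\phi,D^2\phi)\ge 0$ at $(p_0,y_0)$. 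Both branches being nonnegative gives $\min\{\cdots\}\ge 0$.

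For the subsolution property, let $\phi$ be smooth with $V-\phi$ attaining a local maximum at $(p_0,y_0)$, and assume $-\lambda_{\max}(p_0,D_p^2\phi(p_0,y_0))>0$ (otherwise the minimum is already $\le 0$); thus $\phi$ is strictly concave in the tangent directions. Applying Itô's formula to $e^{-rt}\phi(\pi_t,Y_t)$ along an arbitrary $(Z,\pi)\in\CB(p_0,y_0)$, the drift $\tr R\,\pi$ and the filtering bracket reproduce $A\phi-r\phi+ru$, while the extra revelation contributes $\tfrac12\langle\,\cdot\,,D_p^2\phi\,\cdot\,\rangle$ integrated against the additional bracket, a term supported on $T_{\Delta(K)}$ and therefore $\le 0$ by strict concavity. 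Plugging this into the upper-bound ($\le$) side of the dynamic programming principle and dividing by $h$ shows that revelation cannot help and that $rV-AV-ru\le 0$ at $(p_0,y_0)$, hence $\min\{\cdots\}\le 0$.

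The main obstacle is uniqueness, which I would obtain from a \emph{comparison principle}: every bounded continuous subsolution $W_1$ is dominated by every bounded continuous supersolution $W_2$. The difficulty is threefold: the operator is degenerate elliptic (the $p$-component of $\kappa$ vanishes on $\partial\Delta(K)$ and where the drifts $b(k,\cdot)$ coincide); the constraint term $-\lambda_{\max}(p,D_p^2\,\cdot\,)$ couples nonlinearly with the Hessian; and the tangent cone $T_{\Delta(K)}(p)$ degenerates on the faces and vertices of the simplex, where $\lambda_{\max}$ even takes the value $-\infty$. I would attack it by doubling the variables $z=(p,y)$ with a quadratic penalization, invoking the Crandall--Ishii lemma, and then exploiting that the supersolution is concave in $p$ to dominate the constraint term; the simplex boundary would be handled by a perturbation that renders the subsolution strictly concave in $p$ and pushes the extremal points into the interior, so that on the region where the constraint is inactive the estimate reduces to a standard degenerate-elliptic comparison for $rW+H=0$. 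Since $V$ is at once a sub- and a supersolution, comparison applied in both directions against any other continuous solution forces equality, yielding uniqueness and the identification with $V$.
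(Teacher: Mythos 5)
Your supersolution argument and your comparison plan are essentially the paper's: the non-revealing process $\psi=(\chi,Y)$ inserted into the sub-optimality inequality \eqref{subDPP}, It\^{o} plus $h\downarrow 0$ for the PDE branch, concavity of $V$ in $p$ for the constraint branch, and for uniqueness a doubling argument with sup/inf-convolutions, a strictifying convex penalization in $p$ to deactivate the constraint on the subsolution side, and concavity in $p$ of the supersolution (the paper invokes Lemma 3.2 of \cite{cardadouble} for the latter, and Jensen's lemma with Alexandrov in place of your Crandall--Ishii reference; these are interchangeable). Two items you treat as "recorded facts" are not free, though they are not fatal: the dynamic programming principle \eqref{DPPeq1} requires a genuine proof (the paper uses Von Neumann measurable selection, a gluing lemma and a chain of conditional-independence identities to concatenate elements of $\CB$), and the continuity/Lipschitz regularity of $V$ you assert is precisely what the paper avoids needing, by proving the supersolution property for the lower semicontinuous envelope $V_*$ and letting the comparison theorem deliver continuity as a by-product.

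The genuine gap is in your subsolution argument. It rests on an unproven structural claim: that every $\pi$ with law in $\CB(p,y)$ is a semimartingale with drift $\tr R\,\pi_t\,dt$ whose bracket dominates the filtering bracket $\kappa\kappa^{\text{T}}$ with excess supported on $T_{\Delta(K)}(\pi_t)$. Nothing in the definition of $\CB(p,y)$ gives this directly --- one would have to extract it from the $\CF^{(Z,\pi)}$-Markov property by optional projection, including a careful treatment of jumps --- and the paper neither proves nor uses any such decomposition. Worse, even granting it, the It\^{o} step fails as stated: $\lambda_{\max}(\bar p,D^2_p\phi(\bar z))<0$ gives concavity of $\phi$ only \emph{at} $\bar z$, whereas admissible belief processes may split instantaneously to points far from $\bar p$ (already at time $0$ one only has $\EE[\pi_0]=\bar p$), so your "extra revelation" terms and the jump terms are evaluated where $D^2_p\phi$ has no sign, and for no $h>0$ can the path be confined to the neighborhood where $\phi$ is concave. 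The paper circumvents both problems at once: it combines the pointwise Hessian condition on $\phi$ with the \emph{global} concavity of $V$ in $p$ (as in Theorem 3.3 of \cite{cardadouble}) to obtain a global quadratic upper bound on $V(\cdot,\bar y)$, which together with \eqref{DPPeq2} at $h=0$ forces $\pi_0=\bar p$ almost surely for an optimal process; then, instead of applying It\^{o} along $\pi$, it replaces $V(\pi_h,Y_h)$ by $V(\chi_h,Y_h)$ via Jensen's inequality (using $\EE[\pi_h\mid\CF^{Y,+}_h]=\chi_h$, from Lemma \ref{backward} and the tower property) and applies It\^{o} only to the genuine diffusion $\psi=(\chi,Y)$ with generator $A$. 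Without this mechanism, or a rigorous global version of your bracket decomposition including jumps, your subsolution proof does not go through.
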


Let us recall the definitions of sub and super-solutions.
\begin{definition}
We say that a bounded lower semi-continuous function $f$ is a (viscosity) supersolution of the equation \eqref{eqpde2} on $ \Delta(K) \times \RR$  if for any test function $\phi$, $C^2$ in a neighborhood of $ \Delta(K) \times \RR$ (in $\RR^K \times \RR$) such that $\phi \leq f$ on $\Delta(K) \times \RR$ with equality in $(p,y)\in\Delta(K)\times \RR$, we have
\[\lambda_{\max} (p,D_p^2\phi(p,y))\leq 0 \; \text{and} \;\, r\phi(p,y) - A(\phi)(p,y) -r u(p,y) \geq 0.\]
We say that a bounded upper semi-continuous function $f$ is a (viscosity) subsolution of the equation \eqref{eqpde2} on $ \Delta(K)\times \RR$  if for any test function $\phi$, $C^2$ in a neighborhood of $ \Delta(K) \times \RR$ (in $\RR^K \times \RR$) such that $\phi \geq f$ on $\Delta(K) \times \RR$ with equality in $(p,y)\in\Delta(K) \times \RR$, we have
\[\lambda_{\max} (p,D_p^2\phi(p,y)) < 0 \; \Rightarrow \;\, r\phi(p,y) - A(\phi)(p,y) -r u(p,y) \leq 0.\]
\end{definition}

The proof of Theorem \ref{variational2} is based on theorem \ref{cavu} and on dynamic programming. 
\p
\subsection{Possible extensions and open problems.}\label{subsectionopen}

We list below miscellaneous remarks.
\begin{itemize}
\item In comparison to \cite{cardaetal}, in the statement of Theorem \ref{cavu}, we maximize over a set of joint distributions $(Z,\pi)$ rather than on the set of induced distributions for $(\pi,Y)$, which are the only relevant variables for the computation of the objective functional. The latter set of distributions is exactly the set of joint laws of c\`{a}dl\`{a}g processes $(\pi,Y)$ such that 
for all bounded continuous function $\phi$ on $\Delta(K)\times \RR$ which are convex with respect to the first variable, we have:
\[ \forall \, 0\leq s\leq t, \; \EE[ \phi(\pi_t,Y_t) | \CF^{(\pi,Y)}_s] \geq  Q_{t-s}(\phi)(\pi_s,Y_s),\]
where $Q$ is the semi-group of the diffusion process $\psi$.
We do not prove this claim but it follows quite easily from Strassen's Theorem and the same techniques used in Lemma 4 in \cite{cardaetal} and Lemma 5.11 in \cite{GensbittelGrun}. However, such a proof would not be constructive (due to Strassen's theorem) and therefore, we do not think that this result would be more interesting stated this way. Indeed, in order to construct asymptotically  optimal strategies following the proof of Theorem \ref{cavu}, player 1 has to compute the joint law of $(Z,\pi)$ anyway (precisely the conditional law of $\pi$ given $Z$ at times $q/n$ for $q\geq0$).   
\item
One may generalize all the present results for the lower value functions to the case of infinite actions spaces $I,J$ (even if the value $u$ does not exist) by adapting the method developed in \cite{fabiencavu}. Note that the proof of the same kind of results for the upper value functions may rely on different tools as shown in \cite{fabiencavu}, and that the extension of these results in the present model remains an open question.
\item
It can be shown directly (with classical arguments) that the functions $V_n$ and $V$ are continuous. However, this does not simplify nor shorten the proofs.
\item
It is reasonable to think that Theorem \ref{cavu} can be extended to the case of a more general Feller processes $(X,Y)$, at least for diffusions with smooth coefficients.
However, such an extension leads to the following open question: is it possible to write an Hamilton-Jacobi equation in the case of a diffusion process $Z=(X,Y)$ taking values in $\RR^m\times \RR^p$? Note that such an equation would be stated in an infinite dimensional space of probability measures. 
\item
It would be interesting to try to find explicit solutions for simple examples with two states for $X$ and with simple payoff functions and simple diffusion parameters for $Y$. Such an analysis and the comparison with the examples studied in \cite{cardaetal} is left for future research. 
\end{itemize}
\section{Proof of Theorem \ref{cavu}}\label{sectioncontinuouslimit}

Recall the definition of conditional independence.
\begin{definition}
Let $(\Omega,\CA,\PP)$ a probability space and $\CF,\CG,\CH$ three sub $\sigma$-fields of $\CA$. We say that $\CF$ and $\CG$ are conditionally independent given $\CH$ if
\[ \forall F\in \CF, \forall  G \in \CG, \; \PP(F\cap G |\CH) =\PP(F|\CH)\PP(G|\CH).\]
This relation is denoted $\CF\coprod_{\CH} \CG$ and the definition extends to random variables by considering the $\sigma$-fields they generate.
\end{definition}
The next definition is related to the characterization of the Markov property in terms of conditional independence and will be useful in the sequel.
\begin{definition}\label{defna}
Given two random processes $(A_q,B_q)_{q\geq 0}$ (with values in some Polish spaces) defined on $(\Omega,\CA,\PP)$. We say that $(A_{q})_{q\geq 0}$ is non-anticipative with respect to $(B_q)_{q\geq 0}$ if
\[ \forall q\geq 0, \;\; (A_{0},...,A_{q}) \coprod_{B_{0},...,B_{q}} (B_m)_{m\geq 0}. \]
\end{definition}

The next result is a classical property of conditional independence and its proof is postponed to the appendix.
\begin{lemma}\label{representation}
Given two random processes $(A_q,B_q)_{q\geq 0}$ (with values in some Polish spaces), the process $(A_{q})_{q\geq 0}$ is non-anticipative with respect to $(B_q)_{q\geq 0}$ if and only if there exists (on a possibly enlarged probability space) a sequence of independent random variables $(\xi_q)_{q\geq 0}$ uniformly distributed on $[0,1]$ and independent of $(B_q)_{q\geq 0}$, and a sequence of measurable functions $f_q$ (defined on appropriate spaces) such that for all $q\geq 0$ 
\[ A_q = f_q (B_m,\xi_m, m\leq q). \]
\end{lemma}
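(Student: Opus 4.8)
The plan is to prove the two implications separately, the ``if'' direction being a short computation and the ``only if'' direction relying on a sequential disintegration of conditional laws followed by a transfer argument.

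For the ``if'' direction, assume $A_q = f_q(B_m,\xi_m,\, m\leq q)$ with $(\xi_q)_{q\geq0}$ i.i.d. uniform and independent of $(B_m)_{m\geq0}$. Fix $q$ and a bounded measurable $\phi$. Since $(A_0,\dots,A_q)$ is a measurable function of $(B_0,\dots,B_q)$ and $(\xi_0,\dots,\xi_q)$, and the latter are independent of the whole process $(B_m)_{m\geq0}$, conditioning on $\CF^B_\infty:=\sigma(B_m,\, m\geq0)$ amounts to integrating out $(\xi_0,\dots,\xi_q)$ against their product uniform law, which produces a quantity measurable with respect to $\sigma(B_0,\dots,B_q)$. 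Hence $\EE[\phi(A_0,\dots,A_q)\mid \CF^B_\infty]$ is $\sigma(B_0,\dots,B_q)$-measurable, and since $\sigma(B_0,\dots,B_q)\subseteq\CF^B_\infty$ this is exactly $(A_0,\dots,A_q)\coprod_{(B_0,\dots,B_q)}(B_m)_{m\geq0}$, i.e. non-anticipativity.

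For the ``only if'' direction I would first convert non-anticipativity into a causal statement about one-step conditional laws. Writing $A^{\leq q}=(A_0,\dots,A_q)$ and $B^{\leq q}=(B_0,\dots,B_q)$, non-anticipativity says that $\CL(A^{\leq q}\mid \CF^B_\infty)$ admits a version measurable with respect to $\sigma(B^{\leq q})$. Disintegrating this law along the coordinates $A_0,A_1,\dots$ and using the analogous property at rank $q-1$ for the marginal of $A^{\leq q-1}$, one obtains, on the Polish (hence Borel) spaces at hand where regular conditional distributions exist, that the one-step kernel $\CL(A_q\mid A^{\leq q-1},\CF^B_\infty)$ has a version $K_q(\,\cdot\mid A^{\leq q-1},B^{\leq q})$ depending only on $(A_0,\dots,A_{q-1},B_0,\dots,B_q)$. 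Next I would code each $K_q$ by a single uniform: there is a measurable map $h_q$ such that the pushforward of the uniform law of $u$ under $h_q(a_0,\dots,a_{q-1},b_0,\dots,b_q,u)$ equals $K_q(\cdot\mid a_0,\dots,a_{q-1},b_0,\dots,b_q)$. Taking $(\xi_q)_{q\geq0}$ i.i.d. uniform and independent of $(B_m)_{m\geq0}$ and setting $\tilde A_0=h_0(B_0,\xi_0)$, $\tilde A_q=h_q(\tilde A_0,\dots,\tilde A_{q-1},B^{\leq q},\xi_q)$, the independence $\xi_q\coprod(\tilde A^{\leq q-1},\CF^B_\infty)$ gives that $(\tilde A_q)_q$ has precisely the kernels $K_q$ as its one-step conditional laws given the past and the whole $B$ process; an easy induction then shows $((\tilde A_q)_q,(B_m)_m)$ has the same law as $((A_q)_q,(B_m)_m)$, and substituting the recursion yields the causal form $\tilde A_q=f_q(B^{\leq q},\xi_0,\dots,\xi_q)$.

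Finally, to pass from this copy to the genuine process, I would invoke the functional (transfer) form of the noise-outsourcing lemma: since $(\tilde A_q)_q=(f_q(B^{\leq q},\xi^{\leq q}))_q$ is a fixed measurable function of $(B_m)_m$ and of the independent noise $(\xi_q)_q$, and this pair has the same law as $((A_q)_q,(B_m)_m)$, there exist on an enlarged space carrying the original $(A,B)$ some i.i.d. uniform variables $(\xi_q)_q$, independent of $(B_m)_m$, such that $A_q=f_q(B^{\leq q},\xi_0,\dots,\xi_q)$ almost surely, with the very same causal functions $f_q$. The step I expect to be the main obstacle is the extraction of the causal one-step kernels: justifying that the disintegration of $\CL(A^{\leq q}\mid\CF^B_\infty)$ can be chosen measurable in $(A^{\leq q-1},B^{\leq q})$ alone, which rests on the chain rule for conditional independence together with the existence and essential uniqueness of regular conditional distributions on Borel spaces. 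Once this causal structure is secured, the kernel coding and the transfer step are standard.
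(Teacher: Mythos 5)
Your argument is correct, and for the converse it takes a genuinely different route from the paper's. The paper never constructs a copy of the process: fixing auxiliary independent uniform seeds $(\zeta_q)$, it proves by induction on $q$ --- through identities between conditional laws of $(B_0,\dots,B_n)$ --- that $A_{q+1}\coprod_{(B_0,\dots,B_{q+1},\xi_0,\dots,\xi_q)}(B_m)_{m\geq 0}$, where the previously built noises are adjoined to the conditioning, and then applies the coding characterization of conditional independence (Lemma \ref{condind}, i.e.\ Kallenberg's Proposition 5.13) at each rank directly to the original $A_{q+1}$; the point of realizing each $\xi_q$ as a measurable function of $(A_0,B_0,\zeta_0,\dots,A_q,B_q,\zeta_q)$ is precisely that the whole family then lives coherently on a single enlargement, is jointly independent of $(B_m)_{m\geq0}$, and represents the original $A_q$'s, so that no transfer argument is ever needed. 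You instead extract causal one-step kernels $K_q(\cdot\mid A^{\leq q-1},B^{\leq q})$ via the chain rule for conditional independence (your reduction is valid: $(A^{\leq q})\coprod_{B^{\leq q}}\CF^B_\infty$ gives $A_q\coprod_{(A^{\leq q-1},B^{\leq q})}\CF^B_\infty$, and only rank $q$ is needed for this), simulate a copy recursively by randomizing the kernels (the paper's Theorem \ref{dubinsapp} provides exactly the measurable map $h_q$ you invoke), and return to the original pair by one application of Kallenberg's transfer theorem. That last step does go through: applying transfer with $\xi:=(A,B)$, $\tilde\xi:=(\tilde A,B')$ and $\tilde\eta:=$ the noise sequence used to build $\tilde A$ yields, on an extension of the original space, a sequence $\xi^*$ with $((A,B),\xi^*)$ equal in law to $((\tilde A,B'),\tilde\eta)$, so $\xi^*$ consists of i.i.d.\ uniforms independent of $B$ and $A_q=f_q(B^{\leq q},\xi^{*}_0,\dots,\xi^*_q)$ almost surely, since that identity is an almost sure event determined by the common joint law. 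The trade-off: your proof is more modular and makes the causal kernel structure explicit, at the cost of invoking the transfer theorem and tracking almost-sure versions of the kernels along the induction on joint laws; the paper's proof avoids transfer altogether and produces the $\xi_q$ as explicit functions of the original data plus seeds, which is convenient for the way the lemma is used in Step 1 of the proof of Theorem \ref{cavu}. Your ``if'' direction agrees with the paper, which simply declares it obvious.
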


The proof of Theorem \ref{cavu} is divided in two steps and relies on the technical Lemma \ref{compacite}, whose proof is postponed to the next subsection.

\p
\textbf{Step 1:} We prove that $\liminf V_n \geq V$.
\p 
Let $\sigma^*(p,y)$ and $\tau^*(p,y)$ be measurable selections of optimal strategies for player 1 and 2 respectively, in the game $\Gamma(p,y)$ with value $u(p,y)$.
\p
We start with a continuous-time process $(Z_t,\pi_t)_{t\geq 0}$ in $\CB(p,y)$. 
We consider the discrete-time process $(Z_{\frac{q}{n}},\pi_{\frac{q}{n}})_{q \geq 0}$.
Using the Markov property at times $\frac{q}{n}$, we deduce that $(\pi_{\frac{q}{n}})_{q \geq 0}$ is non-anticipative with respect to $(Z_{\frac{q}{n}})_{q \geq 0}$. We now construct a strategy $\overline{\sigma}$ in $G_n(p,y)$ depending on the process $(Z,\pi)$.
Using the conditional independence property (see Lemma \ref{representation}), there exists a sequence $(\xi_{q})_{q \geq 0}$ of independent random variables uniformly distributed on $[0,1]$ and independent from $(Z_{\frac{q}{n}})_{q \geq 0}$, and a sequence of measurable functions $(f_{q})_{q \geq 0}$ such that
\[ \pi_{\frac{q}{n}}=f_{q}((Z_{\frac{m}{n}},\xi_{m})_{m\leq q}) \; \text{for all}\; q \geq 0. \]
We define player $1$'s strategy $\sigma$ as follows: 
\[ \overline{\sigma}_q(Z_{0},...,Z_{\frac{q}{n}},\xi_0,...,\xi_q):=\sigma^*(\pi_{\frac{q}{n}}, Y_{\frac{q}{n}}). \]
This does not define formally a behavior strategy but these transition probabilities induce a joint law for $(Z_{\frac{q}{n}},i_q)_{q \geq 0}$ which can always be disintegrated in a behavior strategy (that does not depend on player $2$'s actions) since the induced process $(i_q)_{q \geq 0}$ is by construction non-anticipative with respect to $(Z_{\frac{q}{n}})_{q \geq 0}$ (using again Lemma \ref{representation}).
By taking the conditional expectation given $(Y_{\frac{\ell}{n}},\pi_{\frac{\ell}{n}},i_\ell,j_\ell)_{\ell=0,...,q}$, the payoff at stage $q$ against any strategy $\tau$ is such that:
\[ \EE_{n,p,\overline{\sigma},\tau}[ g(X_{\frac{q}{n}},Y_{\frac{q}{n}},i_q,j_q)]=\EE_{n,p,\overline{\sigma},\tau}[ \sum_{k \in K}\pi_{\frac{q}{n}}(k)g(k,Y_{\frac{q}{n}},i_q,j_q)]\geq \EE_{n,p,\overline{\sigma},\tau}[ u(\pi_{\frac{q}{n}},Y_{\frac{q}{n}})].\]
Therefore, $\overline{\sigma}$ is such that
\[ V_n(p,y)\geq \inf_{\tau}\gamma_n(p,y,\overline{\sigma},\tau)\geq \sum_{q \geq 0} \lambda_n (1-\lambda_n)^q \EE[ u(\pi_{\frac{q}{n}}, Y_{\frac{q}{n}})].\]
Define $(\tilde{\pi}^n,\tilde{Z}^n)$ as the piecewise-constant process equal to $(Z,\pi)$ at times $\frac{q}{n}$ for $q\geq 0$. Then  $(\tilde{\pi}^n,\tilde{Z}^n)$ converges in probability to $(Z,\pi)$ (see e.g. Lemma VI.6.37 in \cite{JacodShiryaev}) and therefore 
\[\sum_{q \geq 0} \lambda_n (1-\lambda_n)^q \EE[ u(\pi_{\frac{q}{n}}, Y_{\frac{q}{n}})]=\EE[\int_0^\infty re^{-rt}u(\tilde{\pi}^n_t,\tilde{Y}^n_t)dt]\limitn \EE[\int_0^\infty re^{-rt}u(\pi_t,Y_t)dt]\]
As $(Z,\pi)\in \CB(p,y)$ was chosen arbitrarily, we deduce that:
\[ \liminf_{n \rightarrow \infty} V_n(p,y) \geq V(p,y)\]
\p
\textbf{Step 2:} We prove that $\limsup V_n \leq V$.
\p
Let us fix  $(p,y)$ and let $(\varepsilon_n)_{n \geq 1}$ a positive sequence going to zero. For all $n \geq 1$, let $\sigma^n$ be an $\varepsilon_n$-optimal behavior strategy for player $1$ in $G_n(p,y)$. We will construct a strategy $\tau^n$ for player $2$ by induction such that for all $q \geq 0$ the expected payoff at round $q$ is not greater than
\begin{equation}\label{ineqsup}
\EE_{n,p,y,\sigma^n,\tau^n} [ u(\hat{p}_q,Y_{\frac{q}{n}}) + C |p_q-\hat{p}_q|_1],
\end{equation}
for some constant $C$ independent of $n$, where $|.|_1$ denotes the $\ell_1$-norm and where for all $q\geq 0$, $\hat{p}_q$ and $p_q$ denote respectively the conditional laws of $X_{\frac{q}{n}}$ given the information of player $2$ before and after playing round $q$. Precisely, for all $k \in K$:
\[ \hat{p}_q(k):=\PP_{(n,p,y,\sigma^n,\tau^n)}(X_{\frac{q}{n}}=k \mid Y_{0},i_0,j_0,...,Y_{\frac{q-1}{n}},i_{q-1},j_{q-1},Y_{\frac{q}{n}}),\]
\[ p_q(k):=  \PP_{(n,p,y,\sigma^n,\tau^n)}(X_{\frac{q}{n}}=k \mid Y_{0},i_0,j_0,...,Y_{\frac{q}{n}},i_{q},j_{q}).\]
Note that the computation of $\hat{p}_q$ does not depend on $\tau^n_q$. We can therefore define by induction $\tau^n_q:=\tau^*(\hat{p}_q,Y_{\frac{q}{n}})$. Then, inequality \eqref{ineqsup} follows directly from Lemmas V.2.5 and V.2.6 in \cite{msz}.
We now suppress the indices $(n,p,y,\sigma^n,\tau^n)$ from the probabilities and expectations.
Using that $u$ is Lipschitz with respect to $p$, we have 
 \[ \EE[ u(\hat{p}_q,Y_{\frac{q}{n}}) + C |p_q-\hat{p}_q|_1] \leq \EE[ u(p_q,Y_{\frac{q}{n}}) +2 C |p_q-\hat{p}_q|_1]\]
Define also: 
\[\tilde{p}_{q+1}:= \PP(X_{\frac{q+1}{n}}=k |Y_{0},i_0,j_0,...,Y_{\frac{q}{n}},i_{q},j_{q})= (e^{\frac{1}{n}\tr R}p_{q})(k).\]
Note that for all $q\geq 0$, the sequence $(\tilde{p}_{q+1},\hat{p}_{q+1},p_{q+1})$ is a martingale so that using Jensen's inequality.
\[ \EE[ (\tilde{p}_{q+1})^2] \leq \EE[(\hat{p}_{q+1})^2 ] .\]
On the other hand, using the previous equality, we can choose the constant $C$ so that almost surely
\[ \forall q\geq 0, \; |\tilde{p}_{q+1}-p_q|\leq \frac{C}{n}.\]
Mimicking the proof of \cite{cardaetal}, we have
\begin{align*}
\EE[\sum_{q \geq 0} \lambda_n (1-\lambda_n)^q |p_q-\hat{p}_q|_1]&=\sum_{k \in K}\sum_{q \geq 0}  \lambda_n(1-\lambda_n)^q \EE[|p_q(k)-\hat{p}_q(k)|]\\
&\leq \sum_{k \in K} \left(\sum_{q \geq 0} \lambda_n (1-\lambda_n)^q \EE[|p_q(k)-\hat{p}_q(k)|^2] \right)^{1/2}\\
&=\sum_{k \in K} \left(\sum_{q \geq 0} \lambda_n (1-\lambda_n)^q \EE[(p_q(k))^2-(\hat{p}_q(k))^2] \right)^{1/2}
\end{align*}
which is also equal to
\[\sum_{k \in K} \left(\sum_{q \geq 0} \lambda_n (1-\lambda_n)^q \EE[(p_q(k))^2-(\tilde{p}_{q+1}(k))^2+(\tilde{p}_{q+1}(k))^2-(\hat{p}_{q+1}(k))^2+(\hat{p}_{q+1}(k))^2-(\hat{p}_q(k))^2] \right)^{1/2}\]
and therefore is bounded from above by
\[\sum_{k \in K} \left(\sum_{q \geq 0} \lambda_n (1-\lambda_n)^q \EE[(\hat{p}_{q+1}(k))^2-(\hat{p}_q(k))^2] + \frac{2C}{n} \right)^{1/2} \leq K (\lambda_n+\frac{2C}{n})^{1/2}\]
We proved that:
\[ V_n(p,y) \leq \sum_{q \geq 0} \lambda_n (1-\lambda_n)^q \EE[ u(\pi_{\frac{q}{n}}, Y_{\frac{q}{n}})] +K (\lambda_n+\frac{2C}{n})^{1/2} + \varepsilon_n\]
In order to conclude the proof, we consider the continuous-time process $(\tilde{Z}^n,\tilde{\pi}^n)$ which is piecewise-constant and equal to $(Z_{\frac{q}{n}},p_q)$ at times $q/n$. Let us at first extract a subsequence of $V_n(p,y)$ which converges to $\limsup V_n(p,y)$. Then, using Lemma \ref{compacite}, there exists a further subsequence of $(\tilde{Z}^n,\tilde{\pi}^n)$ which converges in law to some process $(Z,\pi)$ in $\CB(p,y)$. 
We have therefore along this subsequence
\[\sum_{q \geq 0} \lambda_n (1-\lambda_n)^q \EE[ u(\pi_{\frac{q}{n}}, Y_{\frac{q}{n}})]=\EE[\int_0^\infty re^{-rt}u(\tilde{\pi}^n_t,\tilde{Y}^n_t)dt]\longrightarrow \EE[\int_0^\infty re^{-rt}u(\pi_t,Y_t)dt],\]
so that
\[ \limsup_{n \rightarrow \infty} V_n(p,y) \leq \EE[\int_0^\infty re^{-rt}u(\pi_t,Y_t)dt] \leq V(p,y).\]

\subsection{A technical Lemma}

In reference to the paper of Meyer and Zheng \cite{MeyerZheng}, we will denote $MZ$ the following topology on the set of c\`{a}dl\`{a}g paths.

\begin{notation}
For a separable metric space $(E,{\rm d})$, the $MZ$-topology on the set  $\DD([0,\infty), E)$ of c\`{a}dl\`{a}g functions is the topology of convergence in measure when $[0,\infty)$ is endowed with the measure $e^{-x}dx$.
The associated weak topology over the set $\Delta(\DD([0,\infty),E ))$ when $\DD([0,\infty),E )$ is endowed with the $MZ$-topology will be denoted $\CL(MZ)$. 
\end{notation}

\begin{remark}\label{product}
In contrast to the Skorokhod topology ($Sk$ hereafter), if $E=F\times F'$ is a product of separable metric spaces, the $MZ$ topology is a product topology, i.e. (as topological spaces)
\[ (\DD([0,\infty),F \times F'),MZ) =  (\DD([0,\infty),F),MZ)\times (\DD([0,\infty) , F'),MZ). \]
\end{remark}

The following remark will be used in the proofs. 
\begin{remark}\label{suslin}
If $E$ is a Polish space, the space $(\DD([0,\infty),E),MZ)$ is a separable metric space which is not topologically complete. However, its Borel $\sigma$-algebra is the same as the one generated by the $Sk$ topology and its topology is weaker than the $Sk$  topology for which the space is Polish, implying that all the probability measures are $MZ$-tight. Therefore, all the results about disintegration and measurable selection usually stated for Polish spaces and which depend only on the Borel structure apply to this space. 
\end{remark}
\p
Recall that the transition probabilities of $Z$ are denoted $(P_t)_{t\geq 0}$, i.e. for any bounded measurable function $\phi$ on $K\times \RR$, we have
\[ P_t(\phi)(z):=\EE_z[ \phi(Z_t)] = \int_{\CZ} \phi d P_t(z), \]
and that $P$ is a Feller semi-group implying that $(z,t)\rightarrow P_t(\phi)(z)$ is continuous for any bounded continuous function $\phi$.

\begin{notation}
Given a process $(Z_t)_{t\in [0,\infty)}$ of  law $\PP_{p,y}$, we define the process $(\widehat{Z}^n_t)_{t\in[0,\infty)} \in \DD([0,\infty), K\times \RR)$  by 
\[ \forall t\geq 0, \;\, \widehat{Z}^n_t := Z_{\frac{\lfloor nt\rfloor}{n}}\]
where $\lfloor a \rfloor$ denotes the greatest integer lower or equal to $a$.
\end{notation}

\begin{lemma}\label{compacite}\
Let $(p,y)$ be given, and let us consider a sequence of c\`{a}dl\`{a}g processes $(Z^n,\pi^n)$ that are piecewise constant on the partition $\{[ \frac{q}{n}, \frac{q+1}{n})\}_{q\geq 0}$ and such that 
\begin{itemize}
\item $Z^n$ has the same law as $\widehat{Z}^n$ (see the above notation).
\item $(\pi_{\frac{q}{n}})_{q\geq 0}$ is non-anticipative with respect to $(Z_{\frac{q}{n}})_{q\geq 0}$.
\item For all $t\geq 0$, for all $k\in K$, $\pi^n_{t}(k) = \PP(X^n_{t}=k |\CF^{(\pi^n,Y^n)}_t)$.
\end{itemize}
Then, the sequence $(Z^n,\pi^n)$ admits an $\CL(MZ)$-convergent subsequence and all the limit points belong to $\CB(p,y)$.
\end{lemma}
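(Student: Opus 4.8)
The plan is to handle the two marginals separately and then recombine them using the fact (Remark \ref{product}) that the $MZ$ topology on a product is the product topology. The $Z^n$-marginal is essentially free: since $Z^n$ has the same law as $\widehat{Z}^n$, and $\widehat{Z}^n \to Z$ almost surely in $MZ$ (a piecewise-constant interpolation of a càdlàg path converges to the path in Lebesgue measure), the laws of $Z^n$ converge to $\PP_{p,y}$, so $\{Z^n\}$ is $MZ$-tight and any limit of its marginal has law $\PP_{p,y}$. For the $\pi^n$-marginal I would invoke the Meyer–Zheng tightness criterion \cite{MeyerZheng}: since $\pi^n$ lives in the compact simplex $\Delta(K)$, it suffices to bound the conditional variation uniformly in $n$. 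The key computation is that, combining non-anticipativity with the fact that $X$ evolves autonomously with transition $e^{\frac{1}{n}R}$ and that $Z^n$ is distributed as the sampled chain, one gets
\[ \EE[\pi^n_{(q+1)/n}\mid \CF^{(\pi^n,Y^n)}_{q/n}] = e^{\frac{1}{n}\tr R}\,\pi^n_{q/n}, \]
exactly as for $\tilde p_{q+1}$ in Step 2. Hence each conditional increment of $\pi^n$ has norm $O(1/n)$, and summing over the $O(nT)$ partition points of $[0,T]$ bounds the conditional variation by $CT$, uniformly in $n$. Tightness of both marginals plus Remark \ref{product} yields $\CL(MZ)$-tightness of $(Z^n,\pi^n)$; I extract a convergent subsequence with limit $(Z,\pi)$, and by the Skorokhod representation theorem (legitimate here by Remark \ref{suslin}) I may assume the $MZ$-convergence holds almost surely, so that along a further subsequence $(\tilde Z^n_t,\tilde\pi^n_t)\to(Z_t,\pi_t)$ for Lebesgue-almost every $t$, almost surely.

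It then remains to verify the two genuinely limiting properties defining $\CB(p,y)$, namely that $Z$ is $\CF^{(Z,\pi)}$-Markov and that $\pi_t(k)=\PP(X_t=k\mid \CF^{(\pi,Y)}_t)$. I would encode both through expectations of test functionals built from finitely many time-coordinates. The discrete assumptions give, for every bounded continuous $f$ on $K\times\RR$, every $j\geq 0$, and every bounded continuous functional $h$ of coordinates at times $s_1<\dots<s_m\leq q/n$,
\[ \EE\big[f(Z^n_{(q+j)/n})\,h\big]=\EE\big[P_{j/n}(f)(Z^n_{q/n})\,h\big], \]
which follows because non-anticipativity reduces conditioning $X$'s future on the past of $(Z,\pi)$ to conditioning on the past of $Z$, and $Z^n$ is Markov at partition times. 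Likewise, the optional-projection assumption reads, for $h$ depending only on $(\pi^n,Y^n)$,
\[ \EE\big[(\indic_{X^n_{q/n}=k}-\pi^n_{q/n}(k))\,h\big]=0. \]

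The \textbf{main obstacle} is precisely that evaluation at a fixed time is not $MZ$-continuous, so I cannot pass these conditional-expectation identities to the limit coordinate-by-coordinate; this is the subtle step that the weakness of the $MZ$ topology forces. The remedy is to restrict all the times $s_1,\dots,s_m$ and $s<t$ to the full-measure set $G$ of ``good'' times at which the almost sure convergence of the first paragraph yields genuine pointwise convergence of the evaluations. For such times the integrands converge almost surely and boundedly, so dominated convergence transfers each identity to the limit, the Feller continuity of $(P_t)$ handling the term $P_{j/n}(f)(Z^n_{q/n})\to P_{t-s}(f)(Z_s)$; this gives, for all $s<t$ in $G$,
\[ \EE\big[f(Z_t)\,h\big]=\EE\big[P_{t-s}(f)(Z_s)\,h\big]\quad\text{and}\quad \EE\big[(\indic_{X_t=k}-\pi_t(k))\,h\big]=0. \]
Finally I would extend these relations from $G$ to all times by approaching arbitrary times from the right within $G$ and using right-continuity of the càdlàg paths together with dominated convergence, and a monotone-class argument in $h$ upgrades them to the full conditional expectations characterizing $\CF^{(Z,\pi)}$-Markovianity and the optional projection. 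Together with the already-established marginal law $\PP_{p,y}$, this shows $(Z,\pi)\in\CB(p,y)$.
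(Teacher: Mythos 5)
Your proposal is correct and follows essentially the same route as the paper: tightness of the $\pi^n$-marginal via the Meyer--Zheng quasimartingale criterion (your explicit computation $\EE[\pi^n_{(q+1)/n}\mid\CF^{(\pi^n,Y^n)}_{q/n}]=e^{\frac{1}{n}\tr R}\pi^n_{q/n}$, valid because $X$ is autonomous and independent of $W$ and of the auxiliary randomizations, is exactly the bound the paper leaves implicit when citing Theorem 4 of \cite{MeyerZheng}), joint tightness via the product structure of $MZ$ (Lemma \ref{tightmarginals} in the paper), Skorokhod representation on a separable metric space, a full-measure set of good times, and passage to the limit of the two finite-dimensional identities by bounded convergence and Feller continuity. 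The one place where you deviate is the extension from the good set $G$ to all times of the projection property $\pi_t(k)=\PP(X_t=k\mid\CF^{(Y,\pi)}_t)$: the paper fixes the test times in $I\cap[0,t]$, lets only the outer time decrease, and then must invoke the backward convergence Lemma \ref{backward} to handle the decreasing $\sigma$-fields, landing on $\CF^{(Y,\pi),+}_t$ and finishing with the tower property. Your ``dominated convergence plus monotone class'' shortcut works, but only if you approximate \emph{all} time indices from the right simultaneously, choosing $s_i^j\downarrow s_i$ in $G$ with $s_i^j\leq t_j$: if you kept the test times fixed in $G\cap[0,t]$ and moved only $t$, the limit identity would only equate $\EE[\indic_{X_t=k}\mid\CG]$ with $\EE[\pi_t(k)\mid\CG]$ for $\CG=\sigma((Y_s,\pi_s):s\in G\cap[0,t])$, and $\pi_t$ need not be $\CG$-measurable when $t\notin G$ --- precisely the measurability obstruction that forces the paper through $\CF^{(Y,\pi),+}_t$. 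With the simultaneous right-approximation (taking $s_i^j=t_j$ when $s_i=t$), the limiting identity holds against continuous functionals of coordinates at arbitrary times $s_1,\dots,s_m\leq t$, $\pi_t$ is $\CF^{(Y,\pi)}_t$-measurable by definition, and the monotone class argument closes the proof without Lemma \ref{backward}; so your variant is a genuine, slightly more elementary alternative for this step, at the cost of a more delicate bookkeeping of the approximating times.
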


\begin{proof}
Let $\QQ_n$ denote a sequence of laws of processes $(Z^n_t,\pi^n_t)_{t\in [0,\infty)}$.
It follows from Proposition VI.6.37 in \cite{JacodShiryaev} that  $Z^n$ $\CL(MZ)$-converges to $Z$ of law $\PP_{p,y}$. 
On the other hand, Theorem 4 in \cite{MeyerZheng} together with a diagonal extraction implies that the set of possible laws for $(Z^n_t,\pi^n_t)_{t\geq 0}$ is $MZ$-relatively sequentially compact, and we may extract some convergent subsequence\footnote{Precisely, for all $T>0$ we may first apply this result to each coordinate of the processes $(Z^n_{t \wedge T},\pi^n_{t \wedge T})_{t \geq 0}$. Then, since convergent sequences are tight (see Theorem 11.5.3 in \cite{dudley} and remark \ref{suslin}), we apply Lemma \ref{tightmarginals} to  deduce that the set of laws $\{\QQ_n, n\geq 1\}$ is tight. Applying the direct part of Prohorov's theorem, which is valid for separable metric spaces, we may extract some convergent subsequence.}.
\p
Let us now prove that the limit belongs to $\CB(p,y)$.
Assume (without loss of generality) that the sequence of processes $(Z^n_t,\pi^n_t)_{t\geq 0}$ $\CL(MZ)$-converges to  $(Z_t,\pi_t)_{t\geq 0}$. Note at first that the law of $(Z_t)_{t\geq 0}$ is $\PP_{p,y}$ since the projection of the trajectories on the first coordinate is continuous (see Remark \ref{product}).
\p
Using Skorokhod's representation Theorem for separable metric spaces (see Theorem 11.7.31 in \cite{dudley}), we can assume that the processes are defined on the same probability space and that $(Z^n_t,\pi^n_t)_{t \geq 0} \overset{MZ}{\rightarrow} (Z_t,\pi_t)_{t \geq 0}$ almost surely.
Up to extracting a subsequence, we can also assume that there exists a subset $I$ of full measure in $[0,\infty)$ (i.e. $\int_I e^{-x} dx=1$) such that for all $t \in I$,  $(Z^n_t,\pi^n_t) \rightarrow (Z_t,\pi_t)$ almost surely. 
\p
We now prove that for all $t\geq 0$ and all $k\in K$
\[ \pi_t(k)=\PP( X_t=k | \CF^{(Y,\pi)}_t).\]
For any $t \in I$, any finite family $(t_1,...,t_r)$ in $I \cap [0,t]$ and any bounded continuous function $\phi$ defined on $(\RR\times \Delta(K))^r$, we have 
\[ \EE[ (\pi^n_t(k) - \indic_{X^n_t=k}) \phi(Y^n_{t_1},\pi^n_{t_1},...,Y^n_{t_r},\pi^n_{t_r})]=0.\]
It follows by bounded convergence that
\[ \EE[ (\pi_t - \indic_{X_t=k}) \phi(Y_{t_1},\pi_{t_1},...,Y_{t_r},\pi_{t_r})]=0.\]
We deduce that 
\[ \pi_t(k)=\PP( X_t=k | \CF^{(Y,\pi)}_t).\]
Given an arbitrary $t$, we take a decreasing sequence in $I$ with limit $t$ and applying Lemma \ref{backward} (see apendix), we obtain:
\[ \pi_t(k)=\PP( X_t=k | \CF^{(Y,\pi),+}_t),\]
which implies the result using the tower property of conditional expectations. 
\p
It remains to prove the Markov property. Let $t_1\leq ... \leq t_m \leq s \leq t$ in $I$, and $\phi,\phi'$ some bounded continuous functions defined on $((K\times\RR)\times \Delta(K))^m$ and $K\times \RR$, we claim that
\[ \EE [ \phi'(Z_t) \phi(Z_{t_1},\pi_{t_1},...,Z_{t_m},\pi_{t_m})]=\EE[P_{t-s}(\phi')(Z_s) \phi(Z_{t_1},\pi_{t_1},...,Z_{t_m},\pi_{t_m})].\]
Indeed, for all $n$, we have 
\[ \EE [ \phi'(Z^n_t) \phi(Z^n_{t_1},\pi^n_{t_1},...,Z^n_{t_m},\pi^n_{t_m})]=\EE[P_{\frac{\lfloor nt \rfloor-\lfloor ns \rfloor}{n}}(\phi')(Z^n_s) \phi(Z^n_{t_1},\pi^n_{t_1},...,Z^n_{t_m},\pi^n_{t_m})],\]
and the conclusion follows by bounded convergence. The property extends to arbitrary $t_1\leq ... \leq t_m \leq s \leq t$ by taking decreasing sequences in $I$ and we conclude as above that $Z$ is an $\CF^{(Z,\pi)}$ Markov process. 
\end{proof}
\p
Let us end this section with a second technical lemma whose proof is similar to Lemma \ref{compacite}.

\begin{lemma}\label{closedgraph}
The set-valued map $(p,y) \rightarrow (\CB(p,y),\CL(MZ))$ has a closed graph with compact values. 
\end{lemma}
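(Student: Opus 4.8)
The claim is that the set-valued map $(p,y) \mapsto \CB(p,y)$ has a closed graph with compact values, where $\CB(p,y)$ is viewed as a subset of $\Delta(\DD([0,\infty),(K\times\RR)\times\Delta(K)))$ endowed with the $\CL(MZ)$ topology. The strategy is to reduce both assertions to the machinery already deployed in the proof of Lemma \ref{compacite}, since the defining constraints of $\CB(p,y)$ — that $Z$ has law $\PP_{p,y}$, that $Z$ is $\CF^{(Z,\pi)}$-Markov, and that $\pi_t(k)=\PP(X_t=k\mid\CF^{(\pi,Y)}_t)$ — are precisely the properties shown to pass to $\CL(MZ)$-limits there.

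First I would establish compactness of each $\CB(p,y)$. Any sequence $(Z,\pi)^{(\ell)}$ in $\CB(p,y)$ has its $Z$-marginal fixed to $\PP_{p,y}$, so the $Z$-coordinates are trivially tight; the $\pi$-coordinates take values in the compact simplex $\Delta(K)$, so by Theorem 4 in \cite{MeyerZheng} together with the same diagonal extraction and Lemma \ref{tightmarginals} used in Lemma \ref{compacite}, the joint laws are $\CL(MZ)$-relatively sequentially compact. It then remains to check that any $\CL(MZ)$-limit point again lies in $\CB(p,y)$. For this I would repeat verbatim the closedness argument of Lemma \ref{compacite}: invoking Skorokhod representation on a separable metric space, extracting a full-measure set $I\subset[0,\infty)$ of almost-sure convergence times, and passing bounded continuous test functionals through the limit to recover both the conditional-law identity $\pi_t(k)=\PP(X_t=k\mid\CF^{(Y,\pi)}_t)$ (upgraded to the right-continuous filtration via Lemma \ref{backward}) and the $\CF^{(Z,\pi)}$-Markov property. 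Since the limiting $Z$-marginal is still $\PP_{p,y}$, this shows $\CB(p,y)$ is $\CL(MZ)$-closed, hence compact.

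For the closed graph, I would take a sequence $(p_\ell,y_\ell)\to(p,y)$ and processes $(Z,\pi)^{(\ell)}\in\CB(p_\ell,y_\ell)$ with $(Z,\pi)^{(\ell)}\to(Z,\pi)$ in $\CL(MZ)$, and show $(Z,\pi)\in\CB(p,y)$. The only new ingredient beyond the compactness argument is the continuity of the data in $(p,y)$: the $Z$-marginals now converge $\PP_{p_\ell,y_\ell}\to\PP_{p,y}$ rather than being constant, which holds because $Z$ is Feller and $p_\ell\otimes\delta_{y_\ell}\to p\otimes\delta_y$ weakly, so the limiting $Z$-marginal is indeed $\PP_{p,y}$. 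The conditional-law and Markov identities are then verified exactly as before, because the relations $\EE[(\pi^{(\ell)}_t(k)-\indic_{X^{(\ell)}_t=k})\,\phi(\cdots)]=0$ and the transition-semigroup identity using $P_{t-s}$ hold for every $\ell$ and involve the fixed Feller semigroup $(P_t)_{t\geq0}$, independent of $(p_\ell,y_\ell)$; bounded convergence closes each relation in the limit.

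The main obstacle, as in Lemma \ref{compacite}, is the care required in passing from the full-measure set $I$ of good times to \emph{all} times $t\geq0$ and to the right-continuous filtration $\CF^{(Y,\pi),+}$ — this is where Lemma \ref{backward} and the tower property are essential, and where a naive argument would only yield the identities on a dense set. Everything else is a matter of transporting the already-established limit arguments, the sole genuine addition being the Feller continuity $\PP_{p_\ell,y_\ell}\to\PP_{p,y}$ needed to pin down the $Z$-marginal of the limit in the closed-graph statement.
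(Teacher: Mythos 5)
Your proposal is correct and follows essentially the same route as the paper: the paper's proof consists precisely of noting that Feller continuity makes $(p,y)\mapsto \PP_{p,y}$ $\CL(Sk)$-continuous (hence $\CL(MZ)$-continuous) and then transporting the tightness and limit-identification arguments of Lemma \ref{compacite} with obvious modifications. Your additional detail on how those arguments adapt (Meyer--Zheng tightness with Lemma \ref{tightmarginals}, Skorokhod representation, the full-measure set $I$ of good times, and Lemma \ref{backward} for the right-continuous filtration) is exactly the intended content that the paper leaves implicit.
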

\begin{proof}
Since $Z$ is a Feller process, the map $(p,y) \rightarrow \PP_{p,y}$ is $\CL(Sk)$-continuous (hence $\CL(MZ)$-continuous, see e.g. \cite{EthierKurtz}). We omit the rest of the proof as it follows exactly from the same arguments as Lemma \ref{compacite} with obvious modifications. 
\end{proof}

\section{The variational characterization}\label{sectionvariational}

We state at first some properties of the function $V$.
\begin{proposition}\label{propertiesV}
$V$ is upper-continuous and for all $y \in \RR$, $p\rightarrow V(p,y)$ is concave on $\Delta(K)$.
\end{proposition}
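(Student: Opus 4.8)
The plan is to prove the two assertions separately: upper semi-continuity follows from the compactness and closed-graph properties of $\CB$ recorded in Lemma \ref{closedgraph}, while concavity in $p$ is obtained by an explicit mixing (splitting) construction inside $\CB(p,y)$.

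For upper semi-continuity, I would fix $(p,y)$, take a sequence $(p_n,y_n)\to(p,y)$, and pass to a subsequence realizing $\limsup_n V(p_n,y_n)$; for each $n$ I pick a maximizer $(Z^n,\pi^n)\in\CB(p_n,y_n)$, which exists by Theorem \ref{cavu}. Since the set-valued map has compact values and closed graph in the $\CL(MZ)$ topology (Lemma \ref{closedgraph}), the family $\{(Z^n,\pi^n)\}$ is relatively sequentially compact — the tightness being obtained exactly as in the proof of Lemma \ref{compacite} — so a further subsequence converges in $\CL(MZ)$ to some $(Z,\pi)$, which belongs to $\CB(p,y)$ by the closed-graph property. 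It then remains to pass to the limit in the objective. Using Skorokhod's representation theorem for separable metric spaces and extracting a full-measure set of times exactly as in the proof of Theorem \ref{cavu}, one gets $u(\pi^n_t,Y^n_t)\to u(\pi_t,Y_t)$ for almost every $t$, almost surely; since $u$ is bounded and continuous, bounded convergence yields
\[ \EE\Big[\int_0^\infty re^{-rt}u(\pi^n_t,Y^n_t)\,dt\Big]\longrightarrow \EE\Big[\int_0^\infty re^{-rt}u(\pi_t,Y_t)\,dt\Big]\le V(p,y).\]
As the left-hand side converges to $\limsup_n V(p_n,y_n)$ along the chosen subsequence, this gives $\limsup_n V(p_n,y_n)\le V(p,y)$.

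For concavity, fix $y$, take $p_0,p_1\in\Delta(K)$, $\alpha\in(0,1)$, and set $p=\alpha p_0+(1-\alpha)p_1$. Choose maximizers $(Z^0,\pi^0)\in\CB(p_0,y)$ and $(Z^1,\pi^1)\in\CB(p_1,y)$, and on a common probability space introduce a Bernoulli variable $\Theta$ with $\PP(\Theta=0)=\alpha$, independent of the pair $((Z^0,\pi^0),(Z^1,\pi^1))$. Define the mixed process $(Z,\pi):=(Z^\Theta,\pi^\Theta)$. Because the law of the Feller process $(X,Y)$ depends affinely on the initial law of $X_0$, the law of $Z$ is $\alpha\PP_{p_0,y}+(1-\alpha)\PP_{p_1,y}=\PP_{p,y}$, giving the first requirement. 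The remaining two defining properties of $\CB(p,y)$ are checked by conditioning on $\Theta$ \emph{without} assuming $\Theta$ observable. Writing $\CG_t=\CF^{(\pi,Y)}_t$ and conditioning on $\CG_t\vee\sigma(\Theta)$, one checks that $\EE[\indic_{X_t=k}\mid\CG_t\vee\sigma(\Theta)]=\pi^\Theta_t(k)=\pi_t(k)$ using that $(Z^i,\pi^i)\in\CB(p_i,y)$; since $\pi_t(k)$ is already $\CG_t$-measurable, the tower property yields $\PP(X_t=k\mid\CG_t)=\pi_t(k)$. The same nested conditioning, applied to $\CF^{(Z,\pi)}_s\vee\sigma(\Theta)$ together with the Markov property of each component, gives $\EE[f(Z_t)\mid\CF^{(Z,\pi)}_s]=P_{t-s}(f)(Z_s)$, so $Z$ is $\CF^{(Z,\pi)}$-Markov. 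Hence $(Z,\pi)\in\CB(p,y)$, and since $\Theta$ is independent of the components the objective splits affinely,
\[\EE\Big[\int_0^\infty re^{-rt}u(\pi_t,Y_t)\,dt\Big]=\alpha\,V(p_0,y)+(1-\alpha)\,V(p_1,y),\]
whence $V(p,y)$ dominates this value, proving concavity.

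The main obstacle I anticipate is the self-consistency of the mixed belief process in the concavity step: one must confirm that defining $\pi$ as the component posterior $\pi^\Theta$ is compatible with the intrinsic constraint $\pi_t=\PP(X_t=\cdot\mid\CF^{(\pi,Y)}_t)$ even though the auxiliary label $\Theta$ does not belong to the filtration $\CF^{(\pi,Y)}$. The identity $\EE[\indic_{X_t=k}\mid\CF^{(\pi,Y)}_t\vee\sigma(\Theta)]=\pi_t(k)$ is exactly what resolves this, and it must be justified with care — in particular, that on $\{\Theta=i\}$ the filtration generated by the mixed process coincides with $\CF^{(\pi^i,Y^i)}$. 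The remaining analytic inputs — relative compactness and passage to the limit in the functional — are already contained in, or parallel to, Lemmas \ref{compacite} and \ref{closedgraph}.
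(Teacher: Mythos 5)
Your proof is correct and follows essentially the same route as the paper: upper semi-continuity from the compact-values/closed-graph properties of $\CB$ (Lemma \ref{closedgraph}) combined with $\CL(MZ)$-convergence of the objective functional, and concavity via the splitting construction mixing ($\varepsilon$-)optimal elements of $\CB(p_0,y)$ and $\CB(p_1,y)$ with an independent Bernoulli label. The only cosmetic difference is that the paper verifies the belief constraint and the Markov property by conditioning on the larger $\sigma$-field $\sigma(\Theta)\vee\CF^{(\pi^0,Y^0)}_t\vee\CF^{(\pi^1,Y^1)}_t$, where independence makes the computation immediate, and then applies the tower property — thereby sidestepping the trace-filtration argument you correctly flag as the delicate point of conditioning only on $\CF^{(\pi,Y)}_t\vee\sigma(\Theta)$.
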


\begin{proof}
That $V$ is upper semi-continuous follows directly from Lemma \ref{closedgraph}. 
\p
Concavity follows from the same method as the well-known splitting Lemma (see e.g. Chapter V.1 in \cite{msz}). Given $y\in \RR$,  $p_1,p_2 \in \Delta(K)$ and $\lambda \in [0,1]$, $\PP_1 \in \CB(p_1,y)$ and $\PP_2 \in \CB(p_2,y)$, let us construct $\PP_{\lambda} \in \CB( \lambda p_1 + (1-\lambda)p_2,y)$ as follows. Assume that $(Z^1,\pi^1)$ and $(Z^2,\pi^2)$ are independent and of respective laws $\PP_1$ and $\PP_2$. Let $\xi$ be a random variable independent of  $(Z^1,\pi^1)$ and $(Z^2,\pi^2)$ and such that $\PP(\xi=1)=\lambda$ and $\PP(\xi=2)=1-\lambda$.
Define $(Z,\pi)$ as the process equal to $(Z^i,\pi^i)$ on $\{\xi=i\}$.  It follows easily by conditioning on $\xi$ that 
\[ \EE[ \int_0^\infty re^{-rt} u(\pi_t,Y_t)dt ]= \lambda \EE[ \int_0^\infty re^{-rt} u(\pi^1_t,Y^1_t)dt ]+(1-\lambda)\EE[ \int_0^\infty re^{-rt} u(\pi^2_t,Y^2_t)dt ]. \]
If we assume that $(Z,\pi)$ has a law $\PP_{\lambda} \in \CB(\lambda p_1 + (1-\lambda)p_2,y)$, then for any $\varepsilon>0$, we can choose $\PP_1$ and $\PP_2$ as $\varepsilon$-optimal probabilities so that
\begin{align*}
V(\lambda p_1 + (1-\lambda)p_2,y) &\geq \EE[ \int_0^\infty re^{-rt} u(\pi_t,Y_t)dt ]\\
&=\lambda \EE[ \int_0^\infty re^{-rt} u(\pi^1_t,Y^1_t)dt ]+(1-\lambda)\EE[ \int_0^\infty re^{-rt} u(\pi^2_t,Y^2_t)dt ] \\
&\geq \lambda V(p_1,y) + (1-\lambda)V(p_2,y) - \varepsilon,
\end{align*}
and this proves that $V$ is concave with respect to $p$ as $\varepsilon$ can be chosen arbitrarily small.
\p
In order to conclude, it remains therefore to prove that $(Z,\pi)$ has a law $\PP_{\lambda} \in \CB(\lambda p_1 + (1-\lambda)p_2,y)$. Note at first that $(Z_{t})_{t\geq 0}$ has law $\PP_{\lambda p_1 +(1-\lambda)p_2,y}$ by construction. 
Moreover, $\CF^{\pi,Y}_t$ is included in $\sigma(\xi)\vee \CF_{t}^{\pi^1,Y^1}\vee \CF_{t}^{\pi^2,Y^2}$. Using independence, we have therefore for all $k\in K$ and all $t\geq 0$:
\begin{align*}
 \PP( X_t=k &| \CF^{Y^1,\pi^1}_{t},\CF^{Y^2,\pi^2}_{t},\xi) \\
 &= \PP(X^1_t=k | \CF^{Y^1,\pi^1}_{t},\CF^{Y^2,\pi^2}_{t}, \xi )\indic_{\xi=1}+ \PP( X^2_t | \CF^{Y^1,\pi^1}_{t},\CF^{Y^2,\pi^2}_{t}, \xi )\indic_{\xi=2} \\
&=\PP( X^1_t=k | \CF^{Y^1,\pi^1}_{t})\indic_{\xi=1}+\PP( X^2_t=k | \CF^{Y^2,\pi^2}_{t})\indic_{\xi=2} =\pi^1_t(k)\indic_{\xi=1}+ \pi^2_t(k) \indic_{\xi=2}=\pi_t(k),
\end{align*}
and using the tower property of conditional expectations, we deduce that
\[ \pi_t(k)= \PP(X_t=k |  \CF^{(\pi,Y)}_{t}).\]
To prove the Markov property, let $s\geq t$ and $\phi$ some bounded continuous function on $\Delta(K)\times \RR$. 
As above, we have: 
\begin{align*}
\EE[\phi(Z_s) | \CF^{Z^1,\pi^1}_{t},\CF^{Z^2,\pi^2}_{t},\xi ] &=\sum_i \indic_{\xi=i}\EE[\phi(Z^i_s) | \CF^{Z^1,\pi^1}_{t},\CF^{Z^2,\pi^2}_{t},\xi ] \\
&= \sum_i \indic_{\xi=i}\EE[\phi(Z^i_s) | \CF^{Z^i}_{t}]\\
&=\sum_i \indic_{\xi=i} P_{s-t}(\phi)(Z^i_t)=P_{s-t}(\phi)(Z_t). 
\end{align*}
The conclusion follows by using the tower property of conditional expectation with the intermediate $\sigma$-field $\CF^{Z,\pi}_{t}$. 
\end{proof}

\subsection{Dynamic programming.}

\begin{notation}
In the following, we will use the notation $\EE_{p,y}$ to denote the  expectation associated to the diffusion process $\psi$ starting at time $0$ with initial position $\psi_0=(p,y)$.
\end{notation}

We now state a dynamic programming principle which will be the key element for the proof of Theorem \ref{variational2}.
\begin{proposition}\label{dynamicprog}
For all $(p,y) \in \Delta(K)\times \RR$, for all $h\geq 0$, we have
\be \label{DPPeq1}
V(p,y) = \max_{(\pi,Y) \in \CB(p,y)} \EE[ \int_0^h re^{-rt} u(\pi_t,Y_t)dt + e^{-rh}V(\pi_h,Y_h)].
\ee
As a consequence,
\be \label{subDPP}
V(p,y) \geq \EE_{p,y}[ \int_0^h re^{-rt} u(\psi_t)dt + e^{-rh}V(\psi_t)].
\ee
Moreover, if $(\pi,Y)$ is an optimal process for $V(p,y)$, then for all $h\geq 0$:
\be \label{DPPeq2}
V(p,y) = \EE[ \int_0^h re^{-rt} u(\pi_t,Y_t)dt + e^{-rh}V(\pi_h,Y_h)].
\ee
\end{proposition}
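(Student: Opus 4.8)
The plan is to establish the central identity \eqref{DPPeq1} by proving two opposite inequalities, after which both \eqref{subDPP} and \eqref{DPPeq2} follow with little extra work. Write $W(p,y)$ for the right-hand side of \eqref{DPPeq1} (the maximum being understood over $(Z,\pi)\in\CB(p,y)$, since the integrand depends only on $(\pi,Y)$). The two inequalities rest on two complementary manipulations of admissible processes: a \emph{restriction} operation, which sends an element of $\CB(p,y)$ to an element of $\CB(\pi_h,Y_h)$ by looking at the process after time $h$ conditionally on the information of player $2$, and a \emph{concatenation} operation, which pastes a nearly optimal continuation onto an initial segment. In both cases the real content is to check that the three defining properties of $\CB$ (the law $\PP_{\cdot,\cdot}$ of $Z$, the $\CF^{(Z,\pi)}$-Markov property of $Z$, and the filtering identity $\pi_t(k)=\PP(X_t=k\mid \CF^{(\pi,Y)}_t)$) are preserved.

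For $V\leq W$, I would start from a process $(Z,\pi)\in\CB(p,y)$ attaining the maximum in \eqref{eqcavu}, which exists by Theorem \ref{cavu}. Splitting the integral at $h$, the key step is to show that, conditionally on $\CF^{(\pi,Y)}_h$, the shifted process $(Z_{h+s},\pi_{h+s})_{s\geq0}$ is almost surely an element of $\CB(\pi_h,Y_h)$. The initial condition matches because $X_h$ has conditional law $\pi_h$ given $\CF^{(\pi,Y)}_h$ precisely by the filtering identity; the conditional law of $Z_{h+\cdot}$ is then $\PP_{\pi_h,Y_h}$ thanks to the $\CF^{(Z,\pi)}$-Markov property applied to $\CF^{(\pi,Y)}_h\subseteq\CF^{(Z,\pi)}_h$, and the shifted filtering identity follows from the tower property together with the decomposition $\CF^{(\pi,Y)}_{h+t}=\CF^{(\pi,Y)}_h\vee\sigma(\pi_{h+s},Y_{h+s}:s\leq t)$. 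Consequently $\EE[\int_h^\infty re^{-rt}u(\pi_t,Y_t)\,dt\mid\CF^{(\pi,Y)}_h]\leq e^{-rh}V(\pi_h,Y_h)$, and adding the contribution of $[0,h]$ yields $V(p,y)\leq W(p,y)$. Once $V\geq W$ is also established, the inequality chain is forced to be an equality for this maximizer, which is exactly \eqref{DPPeq2}.

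For $V\geq W$, fix $(Z,\pi)\in\CB(p,y)$ and $\varepsilon>0$. Using that $(p',y')\mapsto\CB(p',y')$ has compact values and a closed graph (Lemma \ref{closedgraph}) and that the infinite-horizon objective is continuous under $\CL(MZ)$-convergence, a measurable selection theorem provides a measurable family $(p',y')\mapsto\QQ_{p',y'}\in\CB(p',y')$ of $\varepsilon$-optimal continuation laws. I would then build a new process by running $(Z,\pi)$ on $[0,h]$ and, conditionally on $\CF^{(\pi,Y)}_h$ and on the realized state $X_h$, pasting a copy distributed according to the disintegration of $\QQ_{\pi_h,Y_h}$ along its initial state (the required regular conditional distributions and selections being available by Remark \ref{suslin}). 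Since $X_h\sim\pi_h$ given $\CF^{(\pi,Y)}_h$, the pasted continuation has conditional law $\QQ_{\pi_h,Y_h}$ given $\CF^{(\pi,Y)}_h$, and I claim the whole trajectory lies in $\CB(p,y)$. Verifying this membership is the most delicate point and the main obstacle: the filtering identity requires that the continuation be conditionally independent of $\CF^{(\pi,Y)}_h$ given $(\pi_h,Y_h)$ (which holds because its only dependence on the past is through $X_h$, whose conditional law is the $\sigma(\pi_h,Y_h)$-measurable quantity $\pi_h$), and one must also check that the global $\CF^{(Z,\pi)}$-Markov property survives the junction at $h$. Granting this, the glued process has value at least $\EE[\int_0^h re^{-rt}u(\pi_t,Y_t)\,dt+e^{-rh}(V(\pi_h,Y_h)-\varepsilon)]$, whence $V(p,y)\geq W(p,y)$ after letting $\varepsilon\to0$ and optimizing over $(Z,\pi)$.

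Finally, \eqref{subDPP} follows by feeding into \eqref{DPPeq1} the admissible process corresponding to the fully non-revealing behaviour of player $1$: the pair $\psi=(\chi,Y)$ with $\chi_t(k)=\PP(X_t=k\mid\CF^{Y,+}_t)$ lies in $\CB(p,y)$, since $\chi$ is adapted to $\CF^{Y,+}$ (so that $\CF^{(\chi,Y),+}=\CF^{Y,+}$ and the filtering identity holds trivially) while $Z$ remains $\CF^{(Z,\chi)}$-Markov. Plugging $(\chi,Y)$ into \eqref{DPPeq1}, which is a maximum over $\CB(p,y)$, gives $V(p,y)\geq\EE_{p,y}[\int_0^h re^{-rt}u(\psi_t)\,dt+e^{-rh}V(\psi_h)]$, as claimed.
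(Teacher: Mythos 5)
Your overall architecture coincides with the paper's proof: split the payoff at time $h$; prove $V\leq W$ by showing that the conditional law of the shifted process given $\CF^{(\pi,Y)}_h$ lies almost surely in $\CB(\pi_h,Y_h)$; prove $V\geq W$ by a measurable selection of (near-)optimal continuation laws pasted at time $h$; obtain \eqref{subDPP} by plugging the optional projection $(\chi,Y)$ into \eqref{DPPeq1}; and obtain \eqref{DPPeq2} by the forced-equality argument applied to a maximizer. Your restriction step, your treatment of \eqref{subDPP}, and your derivation of \eqref{DPPeq2} are sound and match the paper's.

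The genuine gap is exactly the point you flag and then set aside with ``Granting this'': the verification that the concatenated process belongs to $\CB(p,y)$. This is not a routine check; it is where the bulk of the paper's proof lives. For the filtering identity at times $t\geq h$ the paper runs a monotone-class computation over cylinder functions split at the junction, and --- more seriously --- the preservation of the $\CF^{(Z,\hat\pi)}$-Markov property, i.e.\ \eqref{cond-ind-1}, is derived from a nontrivial chain of conditional-independence relations \eqref{cond-ind-2}--\eqref{cond-ind-7}, which combines the explicit functional form \eqref{construction-tilde-pi} of the pasted belief process with Lemma \ref{condind}. Your one-line justification (``its only dependence on the past is through $X_h$'') speaks to the filtering identity but gives no argument for why $(Z_s)_{s\geq t}$ is conditionally independent of $(\hat\pi_s)_{s\leq t}$ given $(Z_s)_{s\leq t}$ when $t\geq h$. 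Moreover, your pasting differs from the paper's in a way that increases, rather than decreases, the burden: you redraw the entire continuation pair $(Z',\pi')$ from the disintegration of $\QQ_{\pi_h,Y_h}$ along its initial state, so you must additionally verify that the regenerated $Z$-trajectory still has global law $\PP_{p,y}$ and remains Markov in the enlarged filtration; this requires that the conditional law of the fresh continuation given the \emph{whole} past $\CF^{(Z,\pi)}_h$ (not merely $(\CF^{(\pi,Y)}_h,X_h)$) be $\PP_{Z_h}$, an extra conditional-independence statement you never record. The paper sidesteps this by keeping the original $Z$ and appending only the belief process $\tilde\pi$ via Lemma \ref{gluing}, which is legitimate precisely because the $Z$-marginal of the selected continuation law coincides with the conditional law $\PP_{\pi_h,Y_h}$ of $Z_{h+\cdot}$ given $\CF^{(\pi,Y)}_h$. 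Until this membership verification is supplied, inequality \eqref{DPPineq1} --- and with it \eqref{DPPeq1} --- is unproven.
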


\begin{proof}
We prove at first that the maximum is reached in the right-hand side of \eqref{DPPeq1}.
Let us define the $MZ$-topology on the set $\DD([0,h], K\times \RR\times \Delta(K))$ as the convergence in Lebesgue measure of the trajectories together with the convergence of the value of the process at time $h$. Note that this topology coincides (up to an identification) with the induced topology on the subset of $\DD([0,\infty), K\times \RR\times \Delta(K))$ made by trajectories that are constant on $[h,\infty)$. Using this identification and adapting the arguments of Lemma \ref{compacite}, the set of laws of the restrictions of the processes $(Z,\pi) \in \CB(p,y)$ to the time interval $[0,h]$ is $\CL(MZ)$-sequentially relatively compact in $\Delta(\DD([0,h], K\times \RR\times \Delta(K)))$. The existence of a maximum follows since the map 
\[ \PP \in \Delta(\DD([0,h], K\times \RR\times \Delta(K))) \longrightarrow  \EE_{\PP}[ \int_0^h re^{-rt} u(\pi_t,Y_t)dt + e^{-rh}V(\pi_h,Y_h)],\]
is $\CL(MZ)$ upper-semi-continuous.
\p
We now prove \eqref{DPPeq1}. We begin with a measurable selection argument.
\p
The function $\PP \in \CB(p,y) \rightarrow J(\PP):=\EE[\int_{0}^\infty re^{-rt} u(\pi_t,Y_t)dt]$ is $\CL(MZ)$-continuous, and the set-valued map $(p,y) \rightarrow \CB(p,y) $ is $\CL(MZ)$ upper-semi-continuous. 
We deduce that the subset $O$ of the space $\Delta(K)\times \RR\times \Delta(\DD([0,\infty), (K\times \RR)\times \Delta(K)))$ defined by 
\[ O:=\{ (p,y,\PP) |  \PP \in   \CB(p,y) , J(\PP) \geq V(p,y) \} \]
is Borel-measurable (see Remark \ref{suslin}). Moreover, Lemma \ref{compacite} implies that for any $(p,y)$, there exists some $\PP$ such that $(p,y,\PP) \in O$.   
It follows therefore from Von Neumann's selection Theorem (see e.g. Proposition 7.49 in \cite{bertsekas}) that there exists an optimal universally-measurable selection $\phi$ from $\Delta(K)\times \RR$ to $\CB(p,y)$ such that for all $(p,y) \in \Delta(K)\times \RR$, $\phi(p,y) \in O$.
\p
Let $(Z,\pi) \in \CB(p,y)$ and $h\geq 0$ and let $\mu_h$ denote the joint law of $(\pi_h,Y_h)$. By construction, $\phi$ is $\mu_h$-almost surely equal to a Borel map $\tilde{\phi}$. Using Lemma \ref{gluing}, we can construct a process  $(\tilde{\pi}_s)_{s \geq h}$ (on some extension of the probability space) such that the conditional law of $(Z_{h+s},\tilde{\pi}_{h+s})_{s\geq 0}$ given $\CF^{(Y,\pi)}_{h}$ is precisely $\tilde{\phi}(\pi_{h},Y_h)$ and such that there exists a variable $U$, uniformly distributed on $[0,1]$ and independent of $(Z,\pi)$, and a measurable map $\Phi$ such that
\begin{equation}\label{construction-tilde-pi} 
(\tilde{\pi}_{s})_{s\geq h}= \Phi((Z_s)_{s \geq h},(\pi_h,Y_h),\xi).
\end{equation}
\p
Let us consider the process $(Z,\hat{\pi})$ where $\hat{\pi}$ is equal to $\pi$ on $[0,h)$ and to $\tilde{\pi}$ on $[h,\infty)$.
Using the preceding construction, if we assume  that the process $(Z,\hat{\pi})$ has a law in $\CB(p,y)$, we deduce that:
\begin{align*}
V(p,y)  &\geq \EE[\int_0^{\infty} re^{-rt} u(\hat{\pi}_t,Y_t)dt] \\
&= \EE[ \int_0^{h}re^{-rt}u(\pi_t,Y_t)dt] + e^{-rh}\EE[ \EE[ \int_{0}^\infty re^{-rt} u(\tilde{\pi}_{h+t}, Y_{h+t})dt | \CF^{(Y,\pi)}_{h}]]  \\
&\geq   \EE[ \int_0^{h}re^{-rt}u(\pi_t,Y_t)dt] + e^{-rh}\EE[ V(\pi_h,Y_h) ] 
\end{align*}
which would prove that 
\be \label{DPPineq1}
V(p,y) \geq \max_{(Z,\pi) \in \CB(p,y)} \EE[ \int_0^h re^{-rt} u(\pi_t,Y_t)dt + e^{-rh}V(\pi_h,Y_h)].
\ee 
To conclude the proof of \eqref{DPPineq1}, we now check that the process $(Z,\hat{\pi})$ has a law in $\CB(p,y)$.
\p
At first, note that $(Z_t)_{t\geq 0}$ is a Markov process with initial law $p \otimes \delta_y$ by construction. 
Let us prove that for all $t \geq 0$, $\hat{\pi}_t(k)=\PP( X_t=k | \CF^{(Y,\hat{\pi})}_{t}]$. The result is obvious by construction for $t <h$. 
For $t\geq h$, let us consider two finite families $(t_1,...,t_m)$ in $[h, t]$ and $(t'_1,...,t'_\ell)$ in $[0,h)$  and two bounded continuous function $\phi,\phi'$ defined on $(\Delta(K)\times \RR)^m$ and  $(\Delta(K)\times \RR)^\ell$. Then:
\begin{align*}
\EE[ \indic_{X_t=k}& \phi(\hat{\pi}_{t_1},Y_{t_1},...,\hat{\pi}_{t_m},Y_{t_m})\phi'(\hat{\pi}_{t'_1},Y_{t'_1},...,\hat{\pi}_{t'_\ell},Y_{t'_\ell})]
\\&= \EE[ \EE[ \indic_{X_t=k} \phi(\tilde{\pi}_{t_1},Y_{t_1},...,\tilde{\pi}_{t_m},Y_{t_m}) | \CF^{(\pi,Y)}_{h}]\phi'(\pi_{t'_1},Y_{t'_1},...,\pi_{t'_\ell},Y_{t'_\ell})] \\
&= \EE[\tilde{\pi}_t(k) \phi(\tilde{\pi}_{t_1},Y_{t_1},...,\tilde{\pi}_{t_m},Y_{t_m})\phi'(\pi_{t'_1},Y_{t'_1},...,\pi_{t'_\ell},Y_{t'_\ell})]\\
&= \EE[\hat{\pi}_t(k)  \phi(\hat{\pi}_{t_1},Y_{t_1},...,\hat{\pi}_{t_m},Y_{t_m})\phi'(\hat{\pi}_{t'_1},Y_{t'_1},...,\hat{\pi}_{t'_\ell},Y_{t'_\ell})].
\end{align*}
This property extends to bounded measurable functions of any finite family $(t_i)$ in $[0,t]$ by monotone class and we deduce that $\hat{\pi}_t(k)=\PP( X_t=k | \CF^{(Y,\hat{\pi})}_{t})$.
\p
We now prove the Markov property.  For $t \geq 0$, we have to prove that 
\be \label{cond-ind-1}(Z_s)_{s \geq t} \coprod_{(Z_s)_{s \in [0,t]}} (\hat{\pi}_s)_{s \in [0,t]}.\ee
The case $t< h$ follows directly by construction. Let us consider the case $t \geq h$. 
\p
At first, since the conditional law of $(Z_{s},\tilde{\pi}_{s})_{s\geq h}$ given $(\pi_h,Y_h)$ belongs to $\CB(\pi_h,Y_h)$, we have:
\be \label{cond-ind-2} (\tilde{\pi}_s)_{s \in[h,t]} \coprod_{(Z_s)_{s \in[h,t]},(\pi_h,Y_h)} (Z_s)_{s \geq t}.\ee
Using \eqref{construction-tilde-pi} and that $Z$ is an $\CF^{Z,\pi}$-Markov process, we also have
\be \label{cond-ind-3} 
(\tilde{\pi}_s)_{s \in[h,t]} \coprod_{(Z_s)_{s \in[h,t]},(\pi_h,Y_h)} (Z_s,\pi_s)_{s \in[0,h]},
\ee
\be \label{cond-ind-4} 
 (\tilde{\pi}_s)_{s \in[h,t]} \coprod_{(Z_s)_{s \geq t},(Z_s)_{s \in[h,t]},(\pi_h,Y_h)} (Z_s,\pi_s)_{s \in[0,h]}.
\ee
From the characterization of conditional independence in terms of conditional laws recalled in Lemma \ref{condind}, properties \eqref{cond-ind-2}, \eqref{cond-ind-3} and \eqref{cond-ind-4} together imply that:
\be \label{cond-ind-5}
(\tilde{\pi}_s)_{s \in[h,t]} \coprod_{(Z_s,\pi_s)_{s \in[0,h]},(Z_s)_{s \in[h,t]},(\pi_h,Y_h)} (Z_s)_{s \geq t}
\ee
Using again the fact that $Z$ is an $\CF^{Z,\pi}$-Markov process, we also have
\be \label{cond-ind-6} 
(Z_s)_{s \geq t} \coprod_{(Z_s)_{s \in[0,t]}} (\pi_s)_{s \in[0,h]}
\ee
Finally \eqref{cond-ind-5} and \eqref{cond-ind-6} imply 
\be \label{cond-ind-7} (Z_s)_{s \geq t} \coprod_{(Z_s)_{s \in[0,t]}} \left( (\pi_s)_{s \in[0,h]}, (\tilde{\pi}_s)_{s \in [h,t]}\right),
\ee
from which we deduce \eqref{cond-ind-1} since $(\hat{\pi}_s)_{s \in [0,t]}$ is a function of $\left( (\pi_s)_{s \in[0,h]}, (\tilde{\pi}_s)_{s \in [h,t]}\right)$.
This concludes the proof of \eqref{DPPineq1}.
\p
In order to conclude the proof of \eqref{DPPeq1}, we now prove the reverse inequality.
\p
Let $(Z,\pi)$ be an admissible process and $h>0$. We check easily that  the conditional law of $(Z_{h+s},\pi_{h+s})_{s \geq 0}$ given $\CF^{(\pi,Y)}_h$ belongs almost surely to $\CB(\pi_h,Y_h)$. It follows that 
\begin{align*}
\EE[\int_0^{\infty} re^{-rt} u(\pi_t,Y_t)dt] &= \EE[\int_0^{h} re^{-rt} u(\pi_t,Y_t)dt]+ e^{-rh}\EE[ \EE[\int_0^{\infty} re^{-rt} u(\pi_{h+t},Y_{h+t})dt | \CF^{(\pi,Y)}_h ]]\\
&\leq \EE[\int_0^{h} re^{-rt} u(\pi_t,Y_t)dt]+ e^{-rh}\EE[V(\pi_h,Y_h)].
\end{align*}
The conclusion follows by taking the supremum over all admissible processes $(Z,\pi)$.
\p
The inequality \eqref{subDPP} follows directly from \eqref{DPPeq1}.  Precisely, given a process $Z$ with initial law $p\otimes \delta_y$, define $\pi$ by $\pi_t(k):=\chi_t(k)= \PP[ X_t=k | \CF^{Y,+}_{t}]$ (optional projection). As explained before, $(Z,\pi)$ has a law in $\CB(p,y)$ and $(\pi,Y)$  is a diffusion process of semi-group $Q$. 
\p 
We finally prove \eqref{DPPeq2}. If $(Z,\pi) \in \CB(p,y)$ is an optimal process (existence of a maximum follows from Lemma \ref{closedgraph}), then using the same arguments as above, we have for all $h\geq 0$:
\[V(p,y)=\EE[\int_0^{\infty} re^{-rt} u(\pi_t,Y_t)dt]\leq \EE[\int_0^{h} re^{-rt} u(\pi_t,Y_t)dt]+ e^{-rh}\EE[V(\pi_h,Y_h)],\]
and the conclusion follows from \eqref{DPPeq1}.
\end{proof}

\subsection{Proof of Theorem \ref{variational2}.}\label{sectionfiltering}

\begin{proof}[Proof of theorem \ref{variational2}]
The proof is divided in two parts showing respectively that the lower semicontinuous envelope $V_*$ of $V$ is subsolution and that $V$ is supersolution  of \eqref{eqpde2}. Uniqueness and continuity will follow from the comparison result (Theorem  \ref{comparisonthm}) whose proof is postponed to the appendix.
\p
\textbf{part 1:} We prove that the lower semicontinuous envelope of $V$, denoted $V_*$, is a supersolution of \eqref{eqpde2}.
\p
Let $\phi$ be any smooth test function such that $\phi \leq V_*$ with equality in $(p,y) \in  \Delta(K)\times \RR$. As $V_*$ is bounded, we may assume without loss of generality  that $\phi$ is  bounded.
Consider a sequence $(p_n,y_n) \rightarrow (p,y)$ such that $V(p_n,y_n) \rightarrow V_*(p,y)$. From \eqref{subDPP}, we deduce that 
\[V(p_n,y_n)- e^{-rh}\EE_{p_n,y_n} [\phi(\psi_h)]- \EE_{p_n,y_n}[\int_{0}^{h}re^{-rs} u( \psi_s)ds]  \geq 0.\] 
Letting $n \rightarrow \infty$, we obtain that (recall that $\psi$ is a Feller process):
\[ \phi(p,y)-e^{-rh}\EE_{p,y} [\phi(\psi_h)]- \EE_{p,y}[\int_{0}^{h} re^{-rs}u( \psi_s)ds]  \geq 0.\] 
Applying It\^{o}'s formula, we have
\[ \EE_{p,y}[\phi(\psi_h)]= \phi(p,y) + \EE_{p,y}[ \int_0^h A(\phi)(\psi_s)ds].\]
Dividing by $h$ and letting then $h \rightarrow 0$, it follows from usual arguments that 
\be \label{supersol1} 
r\phi(p,y) - A(\phi)(p,y) -r u(p,y) \geq 0.
\ee
Let us prove that $V_*$ is concave with respect to $p$.
Let $y\in \RR$ and  $p=\lambda p_1+(1-\lambda)p_2$ for some $p,p_1,p_2 \in \Delta(K)$ and $\lambda\in[0,1]$. Let $(p^n,y^n)$ a sequence converging to $(p,y)$ such that $V(p^n,y^n) \rightarrow V_* (p,y)$. Then, there exists $p_1^n,p_2^n \in \Delta(K)$  such that $p^n=\lambda p_1^n + (1-\lambda)p_2^n$ and $(p_1^n,p_2^n)\rightarrow (p_1,p_2)$ (it is for example a consequence of Lemma 8.2 in \cite{LarakiSplitting}). It follows that
\[ V(p^n,y^n) \geq \lambda V(p_1^n,y^n) +(1-\lambda)V(p_2^n,y^n).\]
By letting $n\rightarrow \infty$ and using the definition of $V_*$, we deduce that
\[ V_*(p,y) \geq \lambda V_*(p_1,y) +(1-\lambda)V_*(p_2,y),\]
which proves that $V_*$ is concave. We deduce that $\lambda_{\max}(p,D^2\phi_p(p,y)) \leq 0$, and together with \eqref{supersol1} this concludes the proof of the supersolution property.
\p
\textbf{part 2:} We prove that $V$ is subsolution of \eqref{eqpde2}.
\p
Let $\phi$ be smooth test function such that $\phi \geq V$ with equality at $\bar{z}=(\bar{p},\bar{y})$. We have to prove that if $\lambda_{\max}(\bar{p},D^2_p\phi(\bar{z}))<0$, then $rV(\bar{z}) -A(\phi) (\bar{z}) - ru(\bar{z}) \leq 0$.
\p
Using Proposition \ref{dynamicprog}, let $(Z,\pi) \in \CB(\bar{z})$ be an optimal process, so that for all $h\geq 0$, we have 
\be \label{DP1} 
V(\bar{z}) = \EE[ \int_0^h r e^{-rs} u(\pi_s,Y_s)ds + e^{-rh} V(\pi_h,Y_h)]. 
\ee
Since $\lambda_{\max}(\bar{p},D^2_p\phi(\bar{z}))<0$ (see e.g. the proof of Theorem 3.3. in \cite{cardadouble}), there exists $\delta>0$ such that for all $z=(p,\bar{y})$ with $p \in \Delta(K)$ such that $p-\bar{p} \in T_{\Delta(K)}(\bar{p})$, we have:
\[ V(z) \leq V(\bar{z}) + \langle D_p\phi(\bar{z}),p-\bar{p}\rangle - \delta |p- \bar{p}|^2.\]
As $\EE[\pi_0]=\bar{p}$, the variable $\pi_0$ belongs almost surely to the smallest face of $\Delta(K)$ containing $\bar{p}$ so that $\pi_0 - \bar{p} \in T_{\Delta(K)}(\bar{p})$. On the other hand, $Y_0=\bar{y}$ so that \eqref{DP1} with $h=0$ implies
\[ V(\bar{z}) = \EE[ V(\pi_0,\bar{y})] \leq V(\bar{z})  - \delta \EE[|\pi_0- \bar{p}|^2].\]
We deduce that $\pi_0=\bar{p}$ almost surely. 
\p
Recall the definition of the process $\chi$ as an optional projection: 
\[ \forall k \in K, \forall s\geq 0, \; \chi_s(k)=\PP(X_s=k | \CF^{Y,+}_s).\]
Lemma \ref{backward} implies that  $\pi_s(k)= \PP(X_s=k | \CF^{(\pi,Y),+}_s)$, and we deduce that $\EE[\pi_s | \CF^{Y,+}_s]=\chi_s$ using the tower property of conditional expectations. Coming back to \eqref{DP1}, Jensen's inequality implies: 
\[ V(\bar{z})=\EE[ \int_0^h r e^{-rs} u(\pi_s,Y_s)ds + e^{-rh} V(\pi_h,Y_h)] \leq \EE[ \int_0^h r e^{-rs} u(\pi_s,Y_s)ds + e^{-rh} V(\chi_h,Y_h)].\]
Since $V \leq \phi$, we obtain
\[ V(\bar{z})=\phi(\bar{z}) \leq \EE[ \int_0^h r e^{-rt} u(\pi_s,Y_s)ds + e^{-rh} \phi(\chi_h,Y_h)].\]
Dividing the above inequality by $h$, and letting $h$ go to zero, it follows from the usual arguments (using that $\pi_s \rightarrow \pi_0$ when $s\rightarrow 0$, and It\^{o}'s formula)  that:
\[ rV(\bar{z}) -A(\phi)(\bar{z}) - ru(\bar{z}) \leq 0. \]
\end{proof}

\appendix
\section{Technical Proofs and auxiliary tools.}

\subsection{Proofs of Lemma \ref{representation}} 
Let us now recall some properties of conditional independence.
As we will manipulate conditional laws, we introduce a specific notation in order to shorten statements and proofs. 
\begin{notation}
Let $E$ be a Polish space and $A$ be an  $E$-valued random variable defined on some probability space  $(\Omega,\CA,\PP)$.  
\begin{itemize}
\item $\llb A \rrb$ denotes the law of $A$. 
\item Given a $\sigma$-field $\CF \subset \CA$, $\llb A \mid \CF \rrb$ denotes a version of the conditional law of $A$ given $\CF$, hence an $\CF$-measurable random variable  with values in $\Delta(E)$ (see e.g. \cite{bertsekas} Proposition 7.26 for this last point). 
\end{itemize}
\end{notation}


\begin{lemma}\label{condind}{~}\
\begin{itemize}
\item Let $A,B,C$ be three random variables (with values in some Polish space)  defined on the same probability space. $A$ is independent of $B$ conditionally on $C$ if and only if $\llb B | C \rrb = \llb B | C,A \rrb$. 
\item $A \coprod_C B$ if and only if   there exists (on a possibly enlarged probability space) a random variable $\xi$ uniform on $[0,1]$ independent of $(A,C)$, and  a measurable function $f$ such that $B=f(C,\xi)$. 
\end{itemize}
\end{lemma}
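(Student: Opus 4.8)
The plan is to treat the two equivalences separately: the first is a routine manipulation of conditional expectations, while the second rests on a standard randomization (transfer) theorem whose hypotheses must be checked. For the first item, I would pass from the event-based definition of $A\coprod_C B$ to its functional form, namely that $\EE[g(B)h(A)\mid C]=\EE[g(B)\mid C]\,\EE[h(A)\mid C]$ for all bounded measurable $g,h$; this equivalence is a monotone class argument. Assuming $\llb B\mid C\rrb=\llb B\mid C,A\rrb$, I condition on $\sigma(C,A)$ and use the tower property: the quantity $\EE[g(B)\mid C,A]=\int g\,d\llb B\mid C,A\rrb=\int g\,d\llb B\mid C\rrb=\EE[g(B)\mid C]$ is $\sigma(C)$-measurable, so it factors out of $\EE[g(B)h(A)\mid C]$ and yields the product form. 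Conversely, starting from the product form, I would verify that the $\sigma(C)$-measurable variable $\EE[g(B)\mid C]$ is a version of $\EE[g(B)\mid C,A]$ by testing both sides against integrands of the form $h_1(C)h_2(A)$ and invoking the tower property twice; since $E$ is Polish, letting $g$ range over a countable convergence-determining family of bounded continuous functions and pooling the countably many exceptional null sets gives $\llb B\mid C,A\rrb=\llb B\mid C\rrb$ almost surely.

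For the second item, the implication $(\Leftarrow)$ is immediate from the first. If $B=f(C,\xi)$ with $\xi$ uniform on $[0,1]$ and independent of $(A,C)$, then for bounded measurable $g$ the independence of $\xi$ from $(C,A)$ gives $\EE[g(B)\mid C,A]=\int_0^1 g(f(C,u))\,du=\EE[g(B)\mid C]$, whence $\llb B\mid C,A\rrb=\llb B\mid C\rrb$ and the first item yields $A\coprod_C B$. The substance lies in the converse. Here I would first fix a regular conditional distribution $\nu(c,\cdot)=\llb B\mid C=c\rrb$, which exists because $E$ is Polish (see e.g. \cite{bertsekas}) and which, by the first item together with $A\coprod_C B$, also serves as a version of $\llb B\mid (C,A)=(c,a)\rrb$ and hence does \emph{not} depend on $a$. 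The mechanism behind the representation is then a jointly measurable parametrization $G(c,u)$ such that $G(c,\cdot)$ pushes the uniform law on $[0,1]$ forward to $\nu(c,\cdot)$ for every $c$ (an inverse-distribution-function construction rendered measurable in the parameter $c$), and finally the transfer/noise-outsourcing theorem, which realizes $B$ \emph{exactly} as $f(C,\xi)$ for some uniform $\xi$ independent of $(A,C)$, after enlarging the probability space.

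The main obstacle is this last step of the converse. The parametrization $G$ alone only produces a random element with the correct conditional law given $C$, i.e. equality in distribution, whereas the statement demands the pathwise identity $B=f(C,\xi)$ together with independence of $\xi$ from the \emph{full} pair $(A,C)$ and not merely from $C$. Obtaining exact equality is precisely the content of the transfer theorem and is what forces the enlargement of the probability space; moreover, the fact that the adjoined noise can be taken independent of $A$ as well as $C$ is exactly where the hypothesis $A\coprod_C B$ enters, through the $a$-independence of $\nu$ established above. I would therefore carry out the converse by citing the standard randomization lemma rather than reproving it, devoting the care instead to verifying that its hypotheses (a Borel/Polish state space for $B$ and the conditional independence relation) are met in our setting.
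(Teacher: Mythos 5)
Your proposal is correct and takes essentially the same route as the paper, whose entire proof of this lemma is a citation to Propositions 5.6 and 5.13 in \cite{kallenberg}: your direct monotone-class argument for the first item and your appeal to the transfer/randomization theorem for the second are precisely the content of those two propositions. You also correctly identify the one genuinely delicate point --- that the transfer theorem is needed to upgrade equality in conditional law to the exact pathwise identity $B=f(C,\xi)$ with $\xi$ independent of the full pair $(A,C)$, which is where the enlargement of the probability space and the hypothesis $A\coprod_C B$ enter --- so delegating that step to the standard lemma, as the paper itself does, is entirely appropriate.
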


\begin{proof}
See  Proposition 5.6 and 5.13 in \cite{kallenberg}. 
\end{proof}

\begin{proof}[Proof of Lemma \ref{representation}]
The ``if'' part is obvious. Let us prove the ``only if'' part. 
For $q=0$, this is just Lemma \ref{condind}. However, we need to be more precise on how to construct this variable. We assume that there exists a family of independent variables $(\zeta_0,...,\zeta_n)$ uniformly distributed on $[0,1]$ and independent of $(A_0,B_0,...,A_n,B_n)$. Then, the variable $\xi_0$ given by Lemma \ref{condind} can be constructed as a function of $(A_0,B_0,\zeta_0)$ (see the proof of Proposition 5.13 in \cite{kallenberg}). Let us now proceed by induction and assume the above property is true for $p\leq q$ and that $\xi_p$ is measurable with respect to $(A_0,B_0,\zeta_0,...,A_p,B_p,\zeta_p)$. 
Since 
\[(A_0,...,A_{q+1})\coprod_{(B_0,...,B_{q+1})} (B_0,...,B_{n}),\]
 we have 
\[ \llb B_0,...,B_{n} | B_0,...,B_{q+1},A_0,...,A_{q+1} \rrb = \llb B_0,...,B_{n} | B_0,...,B_{q+1} \rrb. \]
We deduce that
\[  \llb B_0,...,B_{n} | B_0,...,B_{q+1},A_{q+1} \rrb=\llb B_0,...,B_{n} | B_0,..,B_{q+1} \rrb.  \]
Using now the induction hypothesis and independence, we also have
\begin{align*} 
&\llb B_0,...,B_{n} | B_0,...,B_{q+1},\xi_0,...,\xi_{q} \rrb =\llb B_0,...,B_{n} | B_0,...,B_{q+1} \rrb,  \\
& \llb B_0,...,B_{n} | B_0,...,B_{q+1},\xi_0,...,\xi_q,A_{q+1} \rrb =  \llb B_0,...,B_{n} | B_0,...,B_{q+1},A_{q+1} \rrb.  
\end{align*}
Finally, we deduce that $A_{q+1} \coprod_{(\xi_0,...,\xi_q,B_0,...,B_{q+1})} (B_0,...,B_{n})$ and the result follows then by applying Lemma \ref{condind}. 
\end{proof}

\subsection{Auxiliary Tools}\label{auxiliary}

The following lemma is classical.
\begin{lemma} \label{tightmarginals}
Let $E$,$E'$ be two separable metric spaces and $A$,$A'$ two tight (resp. closed, convex) subsets of  $\Delta(E)$ and $\Delta(E')$.
Then the set $\mathcal{P}(A,A')$ of probabilities on $E\times E'$ having marginals in the sets $A$ and $A'$ is itself tight (resp. closed, convex).
\end{lemma}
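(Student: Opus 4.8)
The three assertions are logically independent, and the plan is to treat them separately, in increasing order of subtlety.

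For convexity, I would simply exploit the linearity of the marginal maps. If $\mu,\mu' \in \mathcal{P}(A,A')$ and $\lambda \in [0,1]$, then the first marginal of $\lambda\mu+(1-\lambda)\mu'$ equals $\lambda\mu_1+(1-\lambda)\mu_1'$, which lies in $A$ by convexity of $A$; the same argument applies to the second marginal, so the convex combination belongs to $\mathcal{P}(A,A')$.

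For tightness, given $\varepsilon>0$ I would use tightness of $A$ to select a compact set $K_1\subset E$ with $\nu(E\setminus K_1)<\varepsilon/2$ for every $\nu\in A$, and symmetrically a compact $K_2\subset E'$ associated with $A'$. Then $K_1\times K_2$ is compact in $E\times E'$, and for every $\mu\in\mathcal{P}(A,A')$ the estimate
\[ \mu\big((E\times E')\setminus(K_1\times K_2)\big) \leq \mu_1(E\setminus K_1)+\mu_2(E'\setminus K_2) < \varepsilon \]
holds, since the marginals $\mu_1,\mu_2$ lie in $A,A'$ respectively. This yields the tightness of $\mathcal{P}(A,A')$.

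For closedness, the key observation I would isolate is that the marginal projection maps $\pi_1:\Delta(E\times E')\to\Delta(E)$ and $\pi_2:\Delta(E\times E')\to\Delta(E')$ are continuous for the weak topologies: for any bounded continuous $f$ on $E$, the map $(x,y)\mapsto f(x)$ is bounded continuous on $E\times E'$, so $\mu\mapsto\int f\,d\mu_1=\int f(x)\,d\mu(x,y)$ is weakly continuous, and likewise for $\pi_2$. Consequently $\mathcal{P}(A,A')=\pi_1^{-1}(A)\cap\pi_2^{-1}(A')$ is an intersection of preimages of closed sets under continuous maps, hence closed. The only point requiring any genuine care is this continuity of the marginal maps, which I expect to be the main (and still entirely routine) obstacle; the remaining two properties follow immediately from linearity and the product-compact construction above.
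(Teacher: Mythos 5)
Your proof is correct and follows essentially the same route as the paper: tightness via the union bound over a product of compacts, and closedness and convexity via the continuity and linearity of the marginal maps (which the paper invokes in one line and you spell out, correctly, in detail).
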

\begin{proof}
Let us prove the tightness property. Let $\mu \in A$, $\nu \in A'$ and $\pi \in \mathcal{P}(\mu,\nu)$. By assumption, for any $\varepsilon>0$ there is a compact $K_{\varepsilon}$ of $E$, independent of the choice of $\mu$ in $A$, such that $\mu(E/K_{\varepsilon})\leq \varepsilon$, and a compact $K'_{\varepsilon}$, independent of the choice of  $\nu$ in $A'$ such that $\nu(E'/K'_{\varepsilon})\leq \varepsilon$. Then for any pair of random variables $(U,V)$ of law $\pi$:
\[ \mathbb{P}[ (U,V)\notin K_{\varepsilon}\times L_{\varepsilon} ] \leq \mathbb{P}[U\notin K_{\varepsilon}]+ \mathbb{P}[ V\notin L_{\varepsilon} ] \leq 2\varepsilon \]
The closed and convex properties follow directly from the continuity and linearity of the application mapping $\pi$ to its marginals.
\end{proof}

The following theorem is well-known and allows to construct variables with prescribed conditional laws.
\begin{theorem}\label{dubinsapp}(Blackwell-Dubins \cite{blackwelldubins}) \\
Let $E$ be a polish space with $\Delta(E)$ the set of Borel probabilities on $E$,and $([0,1],\CB([0,1]),\lambda)$ the unit interval equipped with Lebesgue's measure. There exists a measurable mapping
\[ \Phi : \Delta(E)\times [0,1]  \longrightarrow E  \]
such that for all $\mu \in \Delta(E)$, the law of $\Phi(\mu,U)$ is $\mu$ where $U$ is the canonical element in $[0,1]$.
\end{theorem}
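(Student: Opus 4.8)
The plan is to reduce the general Polish space $E$ to the real line by a Borel isomorphism and then to construct $\Phi$ explicitly via the quantile (inverse distribution function) transform. By Kuratowski's Borel isomorphism theorem there is a Borel isomorphism $\psi$ of $E$ onto a Borel subset $B\subseteq \RR$. The induced pushforward $\psi_*\mu:=\mu\circ\psi^{-1}$ is a Borel-measurable map $\Delta(E)\to\Delta(\RR)$ (because $\mu\mapsto \int g\,d\mu$ is Borel for every bounded Borel $g$, by a monotone-class argument starting from $C_b$). Hence it suffices to build a Borel map $\Phi_\RR:\Delta(\RR)\times[0,1]\to\RR$ such that $\Phi_\RR(\nu,\cdot)$ has law $\nu$ under $\lambda$; then $\Phi(\mu,u):=\psi^{-1}(\Phi_\RR(\psi_*\mu,u))$ does the job, the law of $\Phi(\mu,U)$ being the image of $\psi_*\mu$ under $\psi^{-1}$, namely $\mu$.

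For $\nu\in\Delta(\RR)$ let $F_\nu(x):=\nu((-\infty,x])$ and set
\[ \Phi_\RR(\nu,u):=\inf\{x\in\RR:\,F_\nu(x)\geq u\}. \]
The crucial elementary identity, valid for all $x\in\RR$ and all $u\in(0,1)$, is
\[ \Phi_\RR(\nu,u)\leq x \quad\Longleftrightarrow\quad u\leq F_\nu(x), \]
which uses only that $F_\nu$ is non-decreasing and right-continuous: the implication $\Leftarrow$ is immediate, and $\Rightarrow$ follows by choosing $y_n\downarrow \Phi_\RR(\nu,u)$ with $F_\nu(y_n)\geq u$ and letting $n\to\infty$. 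Consequently $\lambda(\{u:\Phi_\RR(\nu,u)\leq x\})=F_\nu(x)$, so $\Phi_\RR(\nu,U)$ has distribution function $F_\nu$ and therefore law $\nu$; this is the classical inverse-transform sampling. (For $u\in\{0,1\}$ the infimum may be infinite, but this is a $\lambda$-null set of values of $u$ on which one simply redefines $\Phi_\RR$ to be $0$, affecting neither the law nor measurability.)

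The only point requiring care is the joint Borel measurability of $\Phi_\RR$, and here the identity above does the work. For each fixed $x$ the map $\nu\mapsto F_\nu(x)=\nu((-\infty,x])$ is Borel on $\Delta(\RR)$ (it is upper semicontinuous by the portmanteau theorem, $(-\infty,x]$ being closed), so $(\nu,u)\mapsto F_\nu(x)-u$ is Borel and
\[ \{(\nu,u):\Phi_\RR(\nu,u)\leq x\}=\{(\nu,u):u\leq F_\nu(x)\} \]
is a Borel subset of $\Delta(\RR)\times[0,1]$. Since the half-lines $(-\infty,x]$ generate the Borel $\sigma$-algebra of $\RR$, this shows $\Phi_\RR$ is jointly Borel. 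Composing with the Borel maps $\mu\mapsto\psi_*\mu$ and $\psi^{-1}$ (defined on $B$, and extended by a fixed point $e_0\in E$ off the Borel set $\{(\mu,u):\Phi_\RR(\psi_*\mu,u)\in B\}$, whose complement meets each fibre $\{\mu\}\times[0,1]$ in a $\lambda$-null set) preserves both measurability and the correct law, completing the construction.

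I expect the genuine work to be confined to the measure-theoretic bookkeeping of the isomorphism reduction — verifying that $\psi_*$ is Borel and that one may legitimately restrict the range to $B$ — while the quantile construction and the inverse-transform computation are entirely routine.
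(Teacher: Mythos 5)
Your proof is correct, but it cannot be ``the same as the paper's'': the paper states Theorem \ref{dubinsapp} without proof, as a citation of Blackwell--Dubins \cite{blackwelldubins}, so yours is a self-contained substitute for that reference. Your route --- reduce $E$ to a Borel subset $B\subseteq\RR$ by Kuratowski's isomorphism theorem, check that $\mu\mapsto\psi_*\mu$ is Borel by testing against $C_b$ and a monotone-class argument, and then use the quantile transform with the standard equivalence $\Phi_\RR(\nu,u)\leq x \Leftrightarrow u\leq F_\nu(x)$ to get both the law identity and, via upper semicontinuity of $\nu\mapsto F_\nu(x)$, joint Borel measurability --- is sound, and the edge cases ($u\in\{0,1\}$, values of $\Phi_\RR(\psi_*\mu,\cdot)$ falling outside $B$ on a $\lambda$-null set, handled by the fixed point $e_0$ on a Borel exceptional set) are treated adequately. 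Two remarks on the comparison. First, the actual theorem of \cite{blackwelldubins} is strictly stronger than the statement quoted here: it produces a $\Phi$ such that $\mu_n\to\mu$ weakly implies $\Phi(\mu_n,u)\to\Phi(\mu,u)$ for a.e.\ $u$ (an ``extension of Skorokhod's representation theorem''), and their construction is correspondingly more delicate; your construction does not yield this, since composing with the Borel isomorphism $\psi$ destroys all continuity in $\mu$. Second, this loss is harmless for the paper: Theorem \ref{dubinsapp} is invoked only through Lemma \ref{gluing}, where nothing beyond joint measurability and the correct conditional law is used, so your weaker but elementary argument fully supports every use the paper makes of the result.
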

In the proof of Proposition \ref{dynamicprog}, we use indirectly this result together with  a disintegration theorem. Precisely:
\begin{lemma}\label{gluing}
Let $E,F$ be Polish spaces,  $(\Omega,\CA,\PP)$ be some probability space, $Y$ be an $E$-valued random variable defined on $\Omega$, and $\CF$ a sub-$\sigma$-field of $\CA$. Assume that $f$ is an $\CF$-measurable map from $\Omega$ to $\Delta(E\times F)$ such that the marginal $f_1(\omega) \in \Delta(E)$ of $f(\omega)$ on the first coordinate is a version of the conditional law of $Y$ given $\CF$. Then, (up to enlarging the probability space, there exists a random variable $Z$ such that $f(\omega)$ is a version of the conditional law of $(Y,Z)$ given $\CF$.
\end{lemma}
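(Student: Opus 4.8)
The plan is to build $Z$ by \emph{sampling} the second coordinate of $f(\omega)$ from its regular conditional distribution given the first coordinate, evaluated at the already-realized point $Y(\omega)$, using an extra independent uniform variable together with the Blackwell--Dubins parametrization of Theorem~\ref{dubinsapp}.

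First I would disintegrate $f$. Since $E\times F$ is Polish, every $\nu\in\Delta(E\times F)$ admits a disintegration $\nu(dy,dz)=\nu_1(dy)\,\nu^y(dz)$ with respect to its first marginal $\nu_1$, where $y\mapsto\nu^y\in\Delta(F)$ is defined $\nu_1$-almost everywhere. The point to settle here is that a version of this disintegration can be chosen \emph{jointly} measurably in the pair, i.e. $(\nu,y)\mapsto\nu^y$ is Borel; this is the measurable disintegration theorem for Polish spaces. Setting $k(\omega,y):=f(\omega)^y$ then produces an $\CF\otimes\CB(E)$-measurable kernel $k\colon\Omega\times E\to\Delta(F)$ with $f(\omega)(dy,dz)=f_1(\omega)(dy)\,k(\omega,y)(dz)$ for $\PP$-almost every $\omega$.

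Next I would apply Theorem~\ref{dubinsapp} on the Polish space $F$ to obtain a measurable map $\Psi\colon\Delta(F)\times[0,1]\to F$ such that $\Psi(\eta,U)$ has law $\eta$ whenever $U$ is uniform on $[0,1]$. After enlarging the space by adjoining a uniform variable $U$ independent of $\CA$ (hence of $(Y,\CF)$), I set $Z:=\Psi\bigl(k(\cdot,Y),U\bigr)$, a measurable random variable exhibited as a function of $Y$, the $\CF$-measurable datum $f$, and the independent uniform $U$ --- precisely the form required in \eqref{construction-tilde-pi}. To verify that $f$ is a version of $\llb(Y,Z)\mid\CF\rrb$, fix $G\in\CF$ and bounded measurable $h$ on $E\times F$; by independence of $U$ and Fubini,
\[ \EE\bigl[\indic_G\,h(Y,Z)\bigr]=\int_\Omega\indic_G(\omega)\Bigl(\int_0^1 h\bigl(Y(\omega),\Psi(k(\omega,Y(\omega)),u)\bigr)\,du\Bigr)\PP(d\omega). \]
By the defining property of $\Psi$ the inner integral equals $\tilde h(\omega,Y(\omega))$, where $\tilde h(\omega,y):=\int_F h(y,z)\,k(\omega,y)(dz)$ is $\CF\otimes\CB(E)$-measurable. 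Since $f_1$ is a version of $\llb Y\mid\CF\rrb$, the substitution property of conditional laws (checked for product-form integrands and extended by a monotone class argument) gives $\EE[\tilde h(\cdot,Y)\mid\CF]=\int_E\tilde h(\cdot,y)\,f_1(dy)=\int_{E\times F}h\,df$, whence $\EE[\indic_G\,h(Y,Z)]=\EE\bigl[\indic_G\int_{E\times F}h\,df\bigr]$. As $G$ and $h$ are arbitrary and $\int h\,df$ is $\CF$-measurable, this is exactly the claim.

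The only genuine difficulty is the joint measurability of the disintegration kernel $(\nu,y)\mapsto\nu^y$ (equivalently of $k$), which is where the Polish structure of $E$ and $F$ is used; the remaining steps are a routine combination of Blackwell--Dubins, Fubini, and the substitution rule for conditional distributions.
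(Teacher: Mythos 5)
Your proposal is correct and follows essentially the same route as the paper's proof: both arguments sample $Z=\Phi(k(\omega,Y),U)$ via the Blackwell--Dubins map from a kernel $k(\omega,y)$ disintegrating $f(\omega)$ over its first marginal, with $U$ an independent uniform adjoined by enlargement, and the verification via Fubini and the substitution rule for $\llb Y\mid\CF\rrb=f_1$ is the same computation. The only (cosmetic) divergence is how the jointly measurable kernel is obtained: you invoke the measurable disintegration theorem making $(\nu,y)\mapsto\nu^y$ Borel (e.g.\ Proposition 7.27 in \cite{bertsekas}), whereas the paper sidesteps that parametric statement by first realizing an auxiliary pair $(\tilde Y,\tilde Z)=\Phi(f(\omega),U)$ with conditional law $f(\omega)$ given $\CF$ and then taking an ordinary regular conditional law $g(\omega,\tilde Y)$ of $\tilde Z$ given $(\CF,\tilde Y)$ on the enlarged space --- two equally standard devices yielding the same kernel.
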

\begin{proof}
Up to enlarging the probability space, we may assume that there exists some random variable $U$ uniformly distributed on $[0,1]$ and independent of $(Y,\CF)$. One can define using Theorem \ref{dubinsapp} a variable $(\tilde{Y},\tilde{Z})=\Phi(f(\omega),U)$ having the property that $f_1(\omega)$ is a version of the conditional law of $\tilde{Y}$ given $\CF$. Let $g(\omega, \tilde{Y})$ be a version of the conditional law of $\tilde{Z}$ given $(\CF,\tilde{Y})$, it follows easily that $Z=\Phi(g(\omega,Y),U)$ fulfills the required properties.
\end{proof}

The next Lemma is a generalized martingale backward convergence theorem directly adapted from the corresponding result for classical forward martingales that can be found in chapter III of \cite{msz}.
\begin{lemma}\label{backward}
Let $(X_n)_{n\geq 0}$ be an uniformly bounded sequence of real-valued random variables defined on some probability space $(\Omega,\CA,\PP)$. Let $(\CF_n)_{n \geq 0}$ be a decreasing sequence of sub $\sigma$-fields of $\CA$. Assume that $(X_n)_{n\geq 0}$ converges almost surely to some variable $X$, then $(\EE[ X_n | \CF_n ])_{n \geq 0}$ converges almost surely to $\EE[X | \bigcap_{n \geq 0} \CF_n ]$.
\end{lemma}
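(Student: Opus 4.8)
The plan is to reduce everything to the classical backward martingale convergence theorem (valid for a single fixed integrable variable, as recalled in Chapter III of \cite{msz}) by splitting the error into a ``martingale'' part and a ``tail of the sequence'' part. Write $\CF_\infty := \bigcap_{n\geq 0}\CF_n$ and decompose, for each $n$,
\[ \EE[X_n \mid \CF_n] - \EE[X \mid \CF_\infty] = \underbrace{\EE[X_n - X \mid \CF_n]}_{(\mathrm{I})_n} + \underbrace{\EE[X \mid \CF_n] - \EE[X \mid \CF_\infty]}_{(\mathrm{II})_n}. \]
The variable $X$ is bounded, being the almost sure limit of a uniformly bounded sequence, hence integrable; therefore $(\mathrm{II})_n$ is exactly the object handled by the classical backward convergence theorem applied to the single variable $X$, and $(\mathrm{II})_n \to 0$ almost surely. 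It remains to show $(\mathrm{I})_n \to 0$ almost surely.

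For the first term I would introduce the dominating tail $Z_m := \sup_{n\geq m} |X_n - X|$. Using $X_n \to X$ almost surely together with the uniform boundedness of the sequence, $(Z_m)_{m\geq 0}$ is a uniformly bounded, nonincreasing sequence decreasing almost surely to $0$. For every $n \geq m$ one has the pointwise bound
\[ |(\mathrm{I})_n| \leq \EE[\,|X_n - X| \mid \CF_n\,] \leq \EE[Z_m \mid \CF_n]. \]
Now fix $m$ and let $n \to \infty$: applying the classical backward convergence theorem to the \emph{fixed} bounded variable $Z_m$ gives $\EE[Z_m \mid \CF_n] \to \EE[Z_m \mid \CF_\infty]$ almost surely, whence
\[ \limsup_{n\to\infty} |(\mathrm{I})_n| \leq \EE[Z_m \mid \CF_\infty] \quad \text{almost surely.} \]

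To conclude I would send $m \to \infty$. Since $Z_m \downarrow 0$ almost surely and the $Z_m$ are uniformly bounded, the conditional dominated (or monotone) convergence theorem yields $\EE[Z_m \mid \CF_\infty] \downarrow \EE[0 \mid \CF_\infty] = 0$ almost surely. Taking the infimum over $m$ in the previous display therefore forces $\limsup_{n} |(\mathrm{I})_n| = 0$ almost surely, i.e. $(\mathrm{I})_n \to 0$, and combining with $(\mathrm{II})_n \to 0$ proves the claim. The only genuinely delicate point is the order of the two limits: the ``moving variable'' term $(\mathrm{I})_n$ cannot be treated directly by the backward theorem, so one must first bound it uniformly in $n$ by the single fixed variable $Z_m$, invoke the theorem, and only afterwards let $m\to\infty$. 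This is exactly what the monotone tail $Z_m$ accomplishes, and the uniform boundedness hypothesis guarantees that all the conditional expectations involved are well defined and that the final passage to the limit is legitimate.
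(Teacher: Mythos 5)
Your proof is correct, and it takes a genuinely different route from the paper. The paper's proof sandwiches: it sets $X_n^+=\sup_{m\geq n}X_m$ and $X_n^-=\inf_{m\geq n}X_m$, observes that $Y_n^{\pm}=\EE[X_n^{\pm}\,|\,\CF_n]$ are a backward submartingale and supermartingale respectively, invokes the a.s.\ convergence theorem for backward sub/supermartingales (Dellacherie--Meyer) to get limits $Y^{\pm}$, and then identifies $Y^+=Y^-=\EE[X\,|\,\bigcap_n\CF_n]$ by combining $\EE[Y_n^+-Y_n^-]=\EE[X_n^+-X_n^-]\to 0$ with the squeeze $Y_n^-\leq \EE[X_n\,|\,\CF_n]\leq Y_n^+$ and the known limit of $\EE[X\,|\,\CF_n]$. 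You instead decompose the error into $\EE[X_n-X\,|\,\CF_n]$ plus $\EE[X\,|\,\CF_n]-\EE[X\,|\,\CF_\infty]$, handle the second term by L\'evy's downward theorem for the single fixed variable $X$, and dominate the first by $\EE[Z_m\,|\,\CF_n]$ with the monotone tail $Z_m=\sup_{n\geq m}|X_n-X|$, applying the downward theorem again to each fixed $Z_m$ and then sending $m\to\infty$ via conditional dominated convergence. The trade-off: your argument needs only the backward convergence theorem for conditional expectations of a \emph{fixed} integrable variable (plus conditional dominated convergence), so it is more elementary and self-contained, at the cost of an explicit two-parameter limit interchange — which you correctly control by fixing $m$ first and noting that the countably many a.s.\ statements can be intersected. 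The paper's version is shorter once one accepts backward sub/supermartingale convergence as a black box, since the monotone structure of $Y_n^{\pm}$ delivers existence of the limits in one stroke. Both arguments hinge on the same underlying idea of replacing the moving variable $X_n$ by a monotone tail envelope, and both extend unchanged to sequences dominated by an integrable variable rather than uniformly bounded ones.
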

\begin{proof}
Define $X^+_n=\sup_{m\geq n} X_m$ and $Y^+_n=\EE[X^+_n |\CF_n]$. The sequence $X^+_n$ is non-increasing with limit $X$ and we have
\[ Y^+_{n+1}= \EE[ X^+_{n+1} | \CF_{n+1} ] \leq \EE[X^+_n | \CF_{n+1}]= \EE[ Y_n^+ |\CF_{n+1}].\]
$Y^+_n$ is therefore a backward sub-martingale and converges almost surely to some variable $Y^+$ (see e.g. Theorem 30 p.24 in \cite{dellacheriemeyer}) which is $\bigcap_{n \geq 0} \CF_n$-measurable. Similarly, define $X_n^-=\inf_{m\geq n}X_m$, and $Y^-_n=\EE[X^-_n |\CF_n]$. Then $Y^-_n$ is a backward supermartingale which converges almost surely to $Y^-$. To conclude, note that 
\[ Y_n^- \leq \EE[ X | \CF_n ] \leq Y_n^+,\]
and that  $\EE[ Y_n^+-Y_n^- ]=\EE[X_n^+-X_n^-] \rightarrow 0$
by bounded convergence. Since $\EE[ X | \CF_n ]$ converges almost surely to $\EE[X|\bigcap_{n \geq 0} \CF_n ]$, we deduce that $\EE[ X_n | \CF_n ]$ converges almost surely to $\EE[X|\bigcap_{n \geq 0} \CF_n ]$ as $Y_n^- \leq \EE[ X_n | \CF_n ] \leq Y_n^+$.
\end{proof}

\subsection{comparison}
In this section we adapt the comparison principle given in \cite{cardaetal} for super solutions and sub solutions of \eqref{eqpde2}. 

\begin{remark} \label{extensionLip}
Note that  the process $\chi$ takes values in $\Delta(K)$, and that our assumptions on $b$ and $\sigma$ imply that the functions $c$ and $\kappa$ are Lipschitz continuous and bounded on $ \Delta(K)\times \RR$. In the following, we will assume without loss of generality that the functions $c$ and $\kappa$ are bounded  and Lipschitz on the whole space $\RR^{K+1}$ (the explicit formula cannot  be used directly since the resulting functions would be unbounded and only locally Lipschitz). Similarly, we assume that the function $u$ is bounded and Lipschitz on the whole space $\RR^{K+1}$. 
\end{remark}

With our assumptions on $c$ and $\kappa$, it is well known (see e.g. \cite{userguide}, p.19) that there exists a constant $C$ (depending on the Lipschitz constants of $c,\kappa,u$) such that for any $\eta >0$, $z,z' \in \Delta(K) \times \RR$, $\xi \in \RR^{K+1}$ and symmetric matrices $S,S' \in \CS^{K+1}$ with
\[ \begin{pmatrix}S & 0 \\ 0 & S' \end{pmatrix}\leq  \eta \begin{pmatrix}I & -I \\ -I & I \end{pmatrix}, \]
we have
\[ |u(z)- u(z')| \leq C |z-z'|\]
\[ |\lg b(z), \xi \rg - \lg b(z'), \xi \rg | \leq C |\xi| |z-z'|,\]
\[ -\frac{1}{2}\lg \kappa(z'), -S'\kappa(z') \rg \leq    -\frac{1}{2}\lg \kappa(z),S\kappa(z) \rg + C\eta |z-z'|^2.\]

Let us state  the comparison principle.
\begin{theorem}\label{comparisonthm}
Let  $w_1$ be a subsolution and $w_2$ be a supersolution of \eqref{eqpde2}, then $w_1\leq w_2$.
\end{theorem}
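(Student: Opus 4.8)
The plan is to prove the comparison principle by the standard doubling-of-variables technique from the theory of viscosity solutions, adapted to handle the extra convexity constraint encoded by the $\lambda_{\max}$ term and the fact that the domain is the simplex $\Delta(K)\times\RR$ rather than all of $\RR^{K+1}$. First I would suppose, for contradiction, that $M:=\sup_{z\in\Delta(K)\times\RR}(w_1(z)-w_2(z))>0$. Since $w_1$ is bounded and upper semicontinuous and $w_2$ is bounded and lower semicontinuous, I would introduce for parameters $\alpha,\beta>0$ the penalized function
\[
\Phi_{\alpha,\beta}(z,z'):=w_1(z)-w_2(z')-\frac{\alpha}{2}|z-z'|^2-\beta(|z|^2+|z'|^2),
\]
where the quadratic term in $\beta$ (or some equivalent coercive penalization in the $y$-variable only, since the $p$-variable already lives in a compact set) restores compactness in the unbounded $\RR$-direction and guarantees the existence of a maximizer $(z_{\alpha,\beta},z'_{\alpha,\beta})$. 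Standard arguments then give that $\alpha|z_{\alpha,\beta}-z'_{\alpha,\beta}|^2\to0$, that the maximizers converge (along a subsequence) to a common point, and that $w_1(z_{\alpha,\beta})-w_2(z'_{\alpha,\beta})$ stays above $M/2$ for $\beta$ small and $\alpha$ large.

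Next I would apply the Crandall--Ishii lemma (the theorem of sums, as in \cite{userguide}) at the maximizer to obtain symmetric matrices $S,S'\in\CS^{K+1}$, in the second-order superjet of $w_1$ at $z_{\alpha,\beta}$ and subjet of $w_2$ at $z'_{\alpha,\beta}$ respectively, together with the common gradient $\xi=\alpha(z_{\alpha,\beta}-z'_{\alpha,\beta})$ (plus lower-order $\beta$-corrections), satisfying the matrix inequality
\[
\begin{pmatrix}S & 0 \\ 0 & -S'\end{pmatrix}\leq \eta\begin{pmatrix}I & -I \\ -I & I\end{pmatrix}
\]
with $\eta$ of order $\alpha$. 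I would then use the subsolution property for $w_1$ and the supersolution property for $w_2$. The key point here is the convexity constraint: the subsolution inequality is only an implication conditioned on $\lambda_{\max}(p,D_p^2\phi)<0$, so I must argue that this condition holds at the doubling point, using that $w_2$ is a supersolution (hence concave in $p$, as established for $V_*$ in the proof of Theorem \ref{variational2}) to push the constraint into the strict regime, or else handle the degenerate case where the constraint is active separately. Assuming the equation part is active, subtracting the two viscosity inequalities and invoking the three Lipschitz/ellipticity estimates displayed just before Theorem \ref{comparisonthm} yields
\[
rM/2 \leq r(w_1(z_{\alpha,\beta})-w_2(z'_{\alpha,\beta})) \leq C\,\alpha|z_{\alpha,\beta}-z'_{\alpha,\beta}|^2 + o_\beta(1),
\]
and letting $\alpha\to\infty$ then $\beta\to0$ forces $rM/2\leq0$, a contradiction.

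The main obstacle, as usual for these constrained Hamilton--Jacobi equations, is the interaction between the convexity constraint and the doubling argument. One cannot naively apply the equation inequality for the subsolution because it is gated by the strict condition $\lambda_{\max}(p,D_p^2\phi(p,y))<0$; at points where the constraint is active ($\lambda_{\max}=0$) the subsolution carries no information about $rw_1-A(\phi)-ru$. The standard resolution, following \cite{cardadouble} and \cite{cardaetal}, is to exploit that the supersolution is concave in $p$ and to perform a \emph{sup-convolution in the $p$-variable} (or to add a suitable strictly concave perturbation $-\gamma\,\psi(p)$ that makes the test function strictly concave in $p$ while only slightly perturbing the equation), so that the constraint becomes strict at the contact point and the equation inequality can be invoked. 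I would carefully arrange the penalization and the concave perturbation so that in the limit $\gamma\to0$ the extra terms vanish, recovering the contradiction above. Verifying that the $\lambda_{\max}$ constraint for the supersolution is automatically satisfied at the maximizer, and that the geometry of the tangent cone $T_{\Delta(K)}(p)$ behaves well under the doubling (the two points $z_{\alpha,\beta},z'_{\alpha,\beta}$ may lie on different faces of the simplex), is the delicate bookkeeping I expect to occupy most of the proof.
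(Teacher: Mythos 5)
Your overall architecture (argue by contradiction, penalize in $y$ to restore compactness, double variables, use concavity of the supersolution in $p$, add a strict perturbation in $p$ to activate the equation branch of the subsolution) is the same as the paper's, but there is a genuine gap at the decisive step: the mechanism you propose for making the constraint strict would fail. Any test function produced by doubling touches $w_1$ from above with a $p$-Hessian dominated by the coupling term (of the form $\alpha I$ plus perturbations), so it is strictly \emph{convex} in $p$ at the contact point; the gate $\lambda_{\max}(p,D_p^2\phi)<0$ is never triggered at the test-function level, and the subsolution definition then yields no information at all. Your fix of adding a strictly concave perturbation $-\gamma\psi(p)$ makes this worse, not better: subtracting $\gamma\psi$ from the maximized functional adds $+\gamma\psi$ to the test function for $w_1$, and replacing $w_1$ by $w_1-\gamma\psi$ turns the gate into $\lambda_{\max}(D_p^2\phi+\gamma D^2\psi)<0$, which is harder to satisfy. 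What actually closes the argument---and is the heart of the paper's proof---is to work with \emph{actual Hessians} rather than test-function Hessians: regularize $w_1,w_2$ by quadratic sup/inf-convolution, use Jensen's lemma and Alexandrov's theorem to produce perturbed maximizers at which $w_1^\delta,w_{2,\delta}$ admit second-order expansions, add a strictly \emph{convex} perturbation $\sigma\xi_1(p^1)$ to the doubled functional, and read off from the second-order maximum condition that $D^2_p w_1^\delta \leq D^2_p w_{2,\delta}-\sigma D^2\xi_1$; concavity of the supersolution in $p$ (which comes from Lemma 3.2 in \cite{cardadouble}, not from the $V_*$-argument in the proof of Theorem \ref{variational2}, which concerns $V$ specifically) then gives $\lambda_{\max}(D^2_p w_1^\delta)\leq -\sigma\lambda_{\min}(D^2\xi_1)<0$, forcing the equation branch. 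Your alternative mention of a sup-convolution in $p$ points in the right direction, but as sketched the concave-perturbation route is a dead end.

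Relatedly, you invoke the Crandall--Ishii theorem of sums as a black box, but its applicability is exactly what is in question here: the equation \eqref{eqpde2} is gated by the convexity constraint, the sub/supersolution definitions use test functions on neighborhoods of $\Delta(K)\times\RR$ (a closed set with empty interior in $\RR^{K+1}$), and both the Hamiltonian and $\lambda_{\max}$ must be evaluated at points of the simplex with the correct tangent space. The paper replaces the theorem of sums by its hands-on Lemma \ref{lem:ineqw1w2}, which transfers the viscosity properties to the convolutions $w_1^\delta,w_{2,\delta}$ on all of $\RR^{K+1}$ while evaluating $H$ and the constraint at the proximal points $(z^1)',(z^2)''\in\Delta(K)\times\RR$, via the translation trick $z=z'-\bar z'+\bar z$; the subsequent structural estimates are then taken between these proximal points, with errors controlled by Lemma \ref{lem:limpdelta}. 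Without this transfer, your jet inequalities live at the convolution points, which lie outside the simplex, where neither the equation nor the constraint is defined. You correctly anticipate the face-matching issue for $T_{\Delta(K)}(p)$ and the order of limits in the final estimate, but as written the proposal does not resolve the two points above, and they are where the real work lies.
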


The rest of this subsection is devoted to the proof of this result. Let  $w_1$ be a subsolution and $w_2$ be a supersolution of \eqref{eqpde2} (recall that $w_1,w_2$ are bounded functions). Our aim is to show that $w_1\leq w_2$. We argue by contradiction, and assume that
\be\label{Contradiction}
M:=\sup_{z\in \Delta(K)\times\RR} \left\{ w_1(z)-w_2(z)\right\}\; >\; 0\;.
\ee
Because of the lack of compactness, let $\beta>0$ and $g(y):=\sqrt{(1+y^2)}$. Define 
\[M':=\sup_{z\in \Delta(K)\times\RR} \left\{ w_1(z)-w_2(z) - 2\beta g(y)\right\} .\]
We choose $\beta $ sufficiently small so that $M' >\frac{2C_1 \beta}{r}>0$ with $C_1= \|\kappa\|_\infty^2 + \|b \|_\infty$.
\p
We first regularize the maps $w_1$ and $w_2$ by quadratic sup and inf-convolution respectively. This technique is classical (see \cite{userguide} for details), for $\delta>0$ and $z\in \RR^{K+1}$ we define:
\[w_1^\delta(z):= \max_{z'\in \Delta(K)\times \RR} \left\{ w_1(z')-\frac{1}{2\delta}|z-z'|^2\right\}\]
and
\[w_{2,\delta}(z):= \min_{z'\in \Delta(K)\times\RR} \left\{ w_2(z')+\frac{1}{2\delta}|z-z'|^2\right\}.\]
Note that $w_1^\delta$ and $w_{2,\delta}$ are defined on the whole space $\RR^{K+1}$ and that $w_1^\delta$ is semiconvex while $w_{2,\delta}$ is semiconcave. 
Moreover, we have the following growth property (uniformly in $y$)
\begin{align*}\label{growthw1w2}
\lim_{|p|\to+\infty} |p|^{-1} w_1^\delta(p,y)=-\infty, \; \lim_{|p|\to+\infty}|p|^{-1} w_{2,\delta}(p,y)=+\infty\;.
\end{align*}
Define (with $z^i=(p^i,y^i)$):
\be\label{Mdelta}
M_{\delta}:= \sup_{z^1,z^2\in \RR^{K+1}}  \left\{ w_1^\delta(z^1)-w_{2,\delta}(z^2) - \beta (g(y^1)+g(y^2))  - \frac{1}{2 \delta}|z^1-z^2|^2\right\}.
\ee
The following result is classical.
\begin{lemma}\label{lem:limpdelta} For any $\delta>0$, the problem \eqref{Mdelta} has at least one maximum point. If $(z^1_\delta,z^2_\delta)$ is such a maximum point and if $(z^1)'_\delta\in  \Delta(K)\times \RR$ and $(z^2)''_\delta\in  \Delta(K)\times\RR$ are such that
\be\label{pprimepseconde}
w_1^\delta(z^1_\delta)= w_1((z^1)'_\delta)-\frac{1}{2\delta}|z^1_\delta-(z^1)'_\delta|^2\qquad {\rm and}\quad
 w_{2,\delta}(z^2_\delta)= w_2((z^2)''_\delta)+\frac{1}{2\delta}|z^2_\delta-(z^2)''_\delta|^2
\ee
then, as $\delta \to 0$, $M_\delta \to M'$ while
\[\frac{|z^1_\delta-z^2_\delta|^2}{2\delta}+\frac{|z^1_\delta-(z^1)'_\delta|^2}{2\delta}+ \frac{|z^2_\delta-(z^2)''_\delta|^2}{2\delta} \to 0.\]
\end{lemma}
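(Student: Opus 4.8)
The plan is to follow the classical doubling-of-variables argument (as in \cite{userguide}), adapting the bookkeeping to the present situation where we simultaneously use a sup-convolution of $w_1$, an inf-convolution of $w_2$, and the confining penalty $\beta g$ that compensates the non-compactness in the $y$-direction.

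First I would establish existence of a maximizer in \eqref{Mdelta}. The map $w_1^\delta$ is semiconvex and $w_{2,\delta}$ is semiconcave, hence both are continuous, so the functional in \eqref{Mdelta} is continuous on $\RR^{K+1}\times\RR^{K+1}$. For coercivity, observe that since the convolutions are taken over $\Delta(K)\times\RR$ one has the pointwise bounds $w_1^\delta(p,y)\leq \|w_1\|_\infty - \frac{1}{2\delta}\,\mathrm{dist}(p,\Delta(K))^2$ and $w_{2,\delta}(p,y)\geq -\|w_2\|_\infty + \frac{1}{2\delta}\,\mathrm{dist}(p,\Delta(K))^2$, which drive the objective to $-\infty$ whenever $|p^1|$ or $|p^2|$ diverges, while the terms $-\beta g(y^1),-\beta g(y^2)$ play the same confining role in the $y$-directions. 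Continuity together with this coercivity yields a maximum point. (The convolution maximizers $(z^1)'_\delta,(z^2)''_\delta$ exist for the same reason, $w_1$ being bounded u.s.c. and $w_2$ bounded l.s.c. against a coercive penalty over $\Delta(K)\times\RR$.)

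Next I would rewrite $M_\delta$ at a maximizer $(z^1_\delta,z^2_\delta)$ by substituting the convolution identities \eqref{pprimepseconde}, obtaining
\[
 M_\delta = w_1((z^1)'_\delta) - w_2((z^2)''_\delta) - \beta\big(g(y^1_\delta)+g(y^2_\delta)\big) - \Theta_\delta,
\]
where $\Theta_\delta := \frac{1}{2\delta}\big(|z^1_\delta-z^2_\delta|^2+|z^1_\delta-(z^1)'_\delta|^2+|z^2_\delta-(z^2)''_\delta|^2\big)$ is exactly the threefold penalty appearing in the statement. Testing \eqref{Mdelta} with $z^1=z^2=z$ for arbitrary $z\in\Delta(K)\times\RR$ and using $w_1^\delta\geq w_1$, $w_{2,\delta}\leq w_2$ on $\Delta(K)\times\RR$ gives the easy lower bound $M_\delta\geq M'$. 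Since $w_1,w_2$ are bounded, the displayed identity also yields $M_\delta\leq \|w_1\|_\infty+\|w_2\|_\infty$, so both $\Theta_\delta$ and each $g(y^i_\delta)$ remain bounded uniformly in $\delta$. The factor $\tfrac1{2\delta}$ then forces the three distances entering $\Theta_\delta$ to vanish like $\sqrt\delta$, so all four points $z^1_\delta,z^2_\delta,(z^1)'_\delta,(z^2)''_\delta$ get mutually close, and the $y$-coordinates stay bounded. As the $p$-coordinates of $(z^1)'_\delta,(z^2)''_\delta$ lie in the compact simplex $\Delta(K)$, along any subsequence I can extract a further subsequence along which all four points converge to a common limit $\hat z=(\hat p,\hat y)\in\Delta(K)\times\RR$.

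Finally I would close the argument by semicontinuity. Passing to the limit in the displayed identity along such a subsequence and using that $w_1$ is u.s.c., $w_2$ is l.s.c., $g$ is continuous, and $\Theta_\delta\geq 0$, one gets $\limsup M_\delta \leq w_1(\hat z)-w_2(\hat z)-2\beta g(\hat y)-\liminf\Theta_\delta \leq M'$; combined with $M_\delta\geq M'$ this forces $M_\delta\to M'$. Solving the identity for $\Theta_\delta$ and taking $\limsup$ along the same subsequence gives $\limsup\Theta_\delta\leq w_1(\hat z)-w_2(\hat z)-2\beta g(\hat y)-M'\leq 0$, hence $\Theta_\delta\to0$ along it; since every subsequence contains such a sub-subsequence, the full convergence $\Theta_\delta\to0$ follows. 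The only genuine subtlety—the main obstacle—is the careful bookkeeping of the three penalty contributions and of the two distinct families of points (the convolution maximizers $(z^1)'_\delta,(z^2)''_\delta\in\Delta(K)\times\RR$ versus the doubling variables $z^1_\delta,z^2_\delta\in\RR^{K+1}$), together with checking that the confining term $\beta g$ restores enough compactness in $y$ to justify the extraction.
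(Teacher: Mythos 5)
Your proof is correct and complete: the coercivity bounds via $\mathrm{dist}(p,\Delta(K))$ and the confining term $\beta g$, the identity $M_\delta = w_1((z^1)'_\delta)-w_2((z^2)''_\delta)-\beta\bigl(g(y^1_\delta)+g(y^2_\delta)\bigr)-\Theta_\delta$, the easy inequality $M_\delta\geq M'$, and the subsequence extraction with the semicontinuity squeeze constitute exactly the classical argument (in the spirit of Lemma 3.1 of \cite{userguide}) that the paper invokes without proof when it declares the lemma ``classical''. So you take essentially the same (standard) route the paper relies on, and your write-up correctly handles the one delicate point, namely the bookkeeping between the doubling variables in $\RR^{K+1}$ and the convolution maximizers in $\Delta(K)\times\RR$.
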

We first prove that the regularized sub/supersolutions are sub/supersolutions of sligthly modified equations.
\begin{lemma}\label{lem:ineqw1w2} Assume that $w_1^\delta$ has a second order Taylor expansion at a point $z$. Then
\be \label{ineqw1}
\min \{ rw_1(z)+H(z',Dw_1^\delta(z), D^2w^1_{\delta}(z))  \; ; \;  - \lambda_{\max}( p',D^2_{p}w_1^\delta(z)) \} \leq 0,
\ee
where $z' \in \Delta(K)\times \RR$ is such that $ w_1^\delta(z)= w_1(z')-\frac{1}{2\delta}|z-z'|^2 $.\\
Similarly, if $w_{2,\delta}$ has a second order  Taylor expansion at a point $z$, then
\be 	\label{ineqw2}
rw_{2}(z)+H(z'',Dw_{2,\delta}(z), D^2w_{2,\delta}(z))  \geq 0,
\ee
where $z''\in \Delta(K)\times \RR$ is such that $ w_{2,\delta}(z)= w_2(z'')+\frac{1}{2\delta}|z-z''|^2$.
\end{lemma}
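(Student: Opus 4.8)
The plan is to prove both inequalities by the classical ``magic property'' of sup- and inf-convolutions, which transfers the viscosity property from $w_1$ (resp. $w_2$) at the maximizer $z'$ (resp. minimizer $z''$) to the regularized function at $z$, with the Hamiltonian frozen at $z'$ (resp. $z''$). I focus on \eqref{ineqw1}; the proof of \eqref{ineqw2} is entirely symmetric. First I would record the optimality relations at $z$. Since $z'$ realizes the maximum defining $w_1^\delta(z)$, the map $\zeta\mapsto w_1^\delta(\zeta)+\frac{1}{2\delta}|\zeta-z'|^2$ attains a global minimum at $\zeta=z$: by definition of the sup-convolution it dominates $w_1(z')$ everywhere, and it equals $w_1(z')$ at $\zeta=z$. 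Writing $\xi:=Dw_1^\delta(z)$ and $S:=D^2w_1^\delta(z)$ for the (assumed) second-order Taylor data, first-order optimality gives $\xi=\frac{1}{\delta}(z'-z)$, i.e. $z'=z+\delta\xi$, and the second-order condition gives the lower bound $S\geq-\frac{1}{\delta}I$ (useful later in the comparison argument but not needed here).

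The heart of the matter is the touching step showing that $(\xi,S)$ lies in the second-order superjet of $w_1$ at $z'$. For $z''\in\Delta(K)\times\RR$ near $z'$, I set $\zeta:=z+(z''-z')$, so that $|\zeta-z''|=|z-z'|$; the definition of the sup-convolution at $\zeta$ then yields $w_1^\delta(\zeta)\geq w_1(z'')-\frac{1}{2\delta}|z-z'|^2$. Inserting the Taylor expansion of $w_1^\delta$ at $z$ with increment $\zeta-z=z''-z'$, and using $w_1^\delta(z)=w_1(z')-\frac{1}{2\delta}|z-z'|^2$, the two penalty terms cancel and I obtain
\[ w_1(z'')\leq w_1(z')+\langle\xi,z''-z'\rangle+\tfrac{1}{2}\langle z''-z',S(z''-z')\rangle+o(|z''-z'|^2). \]
Since $z''$ ranges over $\Delta(K)\times\RR$, this is exactly a second-order superjet inclusion at $z'$ compatible with the state constraint; from it one builds in the usual way a genuine $C^2$ test function $\phi\geq w_1$ on $\Delta(K)\times\RR$ with equality at $z'$ and $D\phi(z')=\xi$, $D^2\phi(z')=S$ (add a quartic term to make the contact strict, then globalize using the boundedness of $w_1$).

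Applying the subsolution definition to $\phi$ at $z'$ gives $\min\{rw_1(z')+H(z',\xi,S)\,;\,-\lambda_{\max}(p',D_p^2w_1^\delta(z))\}\leq 0$, where I used $D_p^2\phi(z')=D_p^2w_1^\delta(z)$. Because $w_1^\delta(z)=w_1(z')-\frac{1}{2\delta}|z-z'|^2\leq w_1(z')$ and $r>0$, replacing $w_1(z')$ by $w_1^\delta(z)$ only lowers the first entry of the minimum and so preserves the inequality; this yields \eqref{ineqw1} (with the first term understood as $rw_1^\delta(z)$, which is the meaningful quantity since the point $z$, arising from the doubling/convolution, need not lie in $\Delta(K)\times\RR$). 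The symmetric computation for the inf-convolution places $(Dw_{2,\delta}(z),D^2w_{2,\delta}(z))$ in the second-order subjet of $w_2$ at $z''$, whence the supersolution definition forces $rw_2(z'')+H(z'',Dw_{2,\delta}(z),D^2w_{2,\delta}(z))\geq 0$, and $w_{2,\delta}(z)\geq w_2(z'')$ gives \eqref{ineqw2}.

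The step I expect to be the main obstacle is the touching argument together with the bookkeeping of domains: I must ensure that the comparison point $z''$ stays in $\Delta(K)\times\RR$, so that the test function and the constraint $\lambda_{\max}(p',\cdot)$ act only through tangent directions $T_{\Delta(K)}(p')$, while the point $z$ where the Taylor expansion is taken enters exclusively through the globally defined regularizations $w_1^\delta,w_{2,\delta}$ and never through $w_1,w_2$ themselves. The remaining ingredients — existence of the maximizer/minimizer and the differentiability of the convolutions — are standard (the envelope computation for $\xi$, Alexandrov-type second differentiability a.e., and the semiconvexity/semiconcavity already noted after \eqref{Mdelta}), and I would simply invoke them.
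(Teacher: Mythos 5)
Your argument is correct and essentially the paper's proof: your substitution $\zeta := z+(z''-z')$ is exactly the paper's choice $z=z'-\bar z'+\bar z$, which turns the sup-convolution inequality into a touching of $w_1$ at $z'\in\Delta(K)\times\RR$ by a translate of the second-order Taylor polynomial of $w_1^\delta$ at $z$, and your reading of the first term as $rw_1^\delta(z)$ via $w_1^\delta(z)\le w_1(z')$ (resp.\ $w_{2,\delta}(z)\ge w_2(z'')$) matches the paper's use of $w_1(\bar z')\ge w_1^\delta(\bar z)$. One small caveat: your parenthetical recipe of adding only a quartic term cannot absorb the $o(|z''-z'|^2)$ error near $z'$ (e.g.\ an error of order $|z''-z'|^{5/2}$ beats $C|z''-z'|^4$ there), so you should either keep a small quadratic correction and pass to the limit --- precisely the paper's $\frac{\gamma}{2}|z-\bar z|^2$ term in $\phi_\gamma$ with $\gamma\to 0$ at the end --- or invoke the standard jet/test-function equivalence lemma rather than this sketch of its proof.
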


\begin{proof} We do the proof for $w_1^\delta$, the second part being similar.
Assume that  $w_1^\delta$ has a second order  Taylor expansion at a point $\bar z$ and set, for $\gamma>0$ small,
\[\phi_\gamma(z):= \lg  Dw_1^\delta(\bar z), z-\bar z\rg +\frac12\lg z-\bar z,D^2w_1^\delta(\bar z)(z-\bar z)\rg + \frac{\gamma}{2}|z-\bar z|^2.\]
Let $\bar z'$  denote a point in $\Delta(K)\times\RR$ such that $w_1^\delta(\bar z)= w_1(\bar z')-\frac{1}{2\delta}|\bar z-\bar z'|^2.$\\
Then $w_1^\delta-\phi_\gamma$ has a maximum at $\bar z$, which implies, by definition of $w_1^\delta$, that
\[w_1(z') -\frac{1}{2\delta}|z'-z|^2 \leq \phi_\gamma(z)-\phi_\gamma(\bar z)+ w_1^\delta(\bar z)\qquad \forall z\in \RR^{K+1}, \forall \ z'\in \Delta(K)\times \RR,\]
with an equality for $(z,z')=(\bar z,\bar z')$. If we choose $z=z'-\bar z'+\bar z$ in the above formula, we obtain:
\[w_1(z') \leq \phi_\gamma(z'-\bar z'+\bar z)+\frac{1}{2\delta}|\bar z'-\bar z|^2 -\phi_\gamma(\bar z)+ w_1^\delta(\bar z)\qquad \forall z'\in \Delta(K)\times\RR,\]
with an equality at $z'=\bar z'$. As $w_1$ is a subsolution, we obtain therefore, using the right-hand side of the above inequality as a test function,
\be \label{ineqw1'}
 \min\big\{   rw_1(\bar z')+H(\bar z',D\phi_\gamma(\bar z),D^2\phi_\gamma(\bar z)) \; ; \; - \lambda_{\max}(\bar{z}', D^2_{p}\phi_\gamma(\bar z))\big\} \leq 0.
\ee
By construction, we have $D\phi_\gamma(\bar z)= Dw_1^\delta(\bar z)$, $D^2\phi_\gamma(\bar z)= D^2w_1^\delta(\bar z)+\gamma I$ and $w_1(\bar z')\geq w_1^\delta(\bar z)$.
The conclusion follows therefore by letting $\gamma\to 0$.
\end{proof}

In order to use inequality \eqref{ineqw1}, we have to produce points at which $w_1^\delta$ is strictly concave with respect to the first variable.
For this reason, as in \cite{cardaetal}, we introduce a additional penalization. For $\sigma>0$ and $z^i=(p^i,y^i) \in \RR^{K+1}$, we consider
\[M_{\delta,\sigma} := \sup_{(z^1,z^2)\in (\RR^{K+1})^2}  \left\{ w_1^\delta(z^1)-w_{2,\delta}(z^2) - \beta (g(y^1)+g(y^2))+ \sigma g(|p^1|) - \frac{1}{2 \delta}|z^1-z^2|^2 \right\}.\]
One easily checks that there exists a maximizer $(\hat z^1,\hat{z}^2)$ to the above problem. In order to use Jensen's Lemma (Lemma A.3 in  \cite{userguide}), we also need  this maximum to be strict. For this we modify the penalization: we set for $i=1,2$:
\[ \xi_1(p^1)= g(|p^1|) - \sigma g(|p^1-\hat p^1|), \quad \zeta_i(y^i)=-\beta g(y^i) -\sigma g(y^i-\hat{y}^i) .\]
We choose $\sigma>0$ sufficiently small so that $\xi_1$ has a positive second order derivative. By definition,
\[M_{\delta,\sigma} =  \sup_{(z^1,z^2)\in (\RR^{K+1})^2}  \left\{  w_1^\delta(z^1)-w_{2,\delta}(z^2) +\zeta_1(y^1)+\zeta_2(y^2)+ \sigma \xi_1(|p^1|) - \frac{1}{2 \delta}|z^1-z^2|^2 \right\},\]
and the above problem has a strict maximum at $(\hat z^1, \hat{z}^2)$.
As the map $(z^1,z^2)\to w_1^\delta(z^1)-w_{2,\delta}(z^2) +\zeta_1(y^1)+\zeta_2(y^2)+ \sigma\xi_1(p^1) - \frac{1}{2 \delta}|z^1-z^2|^2$ is semiconcave, Jensen's Lemma (together with Alexandrov theorem) states that, for any $\ep>0$, there is vector $a_\ep\in (\RR^{K+1})^2$ with $|a_\ep|\leq \ep$, such that the problem
\[M_{\delta,\sigma,\ep}:= \sup_{z^1,z^2\in (\RR^{K+1})^2}  \left\{  w_1^\delta(z^1)-w_{2,\delta}(z^2) +\zeta_1(y^1)+\zeta_2(y^2)+ \sigma \xi_1(|p^1|) - \frac{1}{2 \delta}|z^1-z^2|^2+ \lg a_\ep,(z^1,z^2)\rg \right\},\]
has a maximum point $(z^1_{\delta,\sigma,\ep},z^2_{\delta,\sigma,\ep})\in (\RR^{K+1})^2$ at which the maps $w_1^\delta$ and $w_{2,\delta}$ have a second order  Taylor expansion.
From Lemma \ref{lem:ineqw1w2}, we have
\be\label{ineqw1delta}
 	\min\big\{   rw_1(z^1_{\delta,\sigma,\ep})+H((z^{1})'_{\delta,\sigma,\ep},Dw_1^\delta(z^1_{\delta,\sigma,\ep}),D^2w_1^\delta(z^1_{\delta,\sigma,\ep}))\  \,;\,  - \lambda_{\max}((z^{1})'_{\delta,\sigma,\ep}, D^2_{p}w_1^\delta(z^1_{\delta,\sigma,\ep}))\big\} \leq 0,
\ee
and
\be\label{ineqw2delta}
rw_{2}(z^2_{\delta,\sigma,\ep})+H((z^2)_{\delta,\sigma,\ep}'',Dw_{2,\delta}(z^2_{\delta,\sigma,\ep}),D^2w_{2,\delta}(z^2_{\delta,\sigma,\ep})) \geq 0,
\ee
where $(z^1)'_{\delta,\sigma,\ep}$ and $(z^2)''_{\delta,\sigma,\ep}$ are points in $\Delta(K)\times \RR$ at which one has
\[ w_1^\delta(z^1_{\delta,\sigma,\ep})= w_1((z^1)'_{\delta,\sigma,\ep})-\frac{1}{2\delta}|z^1_{\delta,\sigma,\ep}-(z^1)_{\delta,\sigma,\ep}'|^2\; {\rm and }\;
 w_{2,\delta}(z^2_{\delta,\sigma,\ep})= w_2((z^2)''_{\delta,\sigma,\ep})+\frac{1}{2\delta}|z^2_{\delta,\sigma,\ep}-(z^2)''_{\delta,\sigma,\ep}|^2 .\]
Using the properties of inf and sup-convolutions, we have:
\be\label{Dw1=Dw2=}
Dw_1^\delta(z^1_{\delta,\sigma,\ep})=-\frac{1}{\delta}\left(z^1_{\delta,\sigma,\ep}-(z^1)_{\delta,\sigma,\ep}'\right)\;{\rm and} \;
D w_{2,\delta}(z^2_{\delta,\sigma,\ep})= \frac{1}{\delta}\left(z^2_{\delta,\sigma,\ep}-(z^2)_{\delta,\sigma,\ep}''\right).
\ee
By definition of $M_{\delta,\sigma,\ep}$ we have for all $(z^1,z^2) \in (\RR^{K+1})^2$:
\[w_1^\delta(z^1)- w_{2,\delta}(z^2)+\zeta_1(y^1)+\zeta_2(y^2)+ \sigma \xi_1(p_1)  \leq M_{\delta,\sigma,\ep}+ \frac{1}{2 \delta}|z^1-z^2|^2 -  \lg a_\ep,(z^1,z^2)\rg,\]
with an equality at $(z^1_{\delta,\sigma,\ep},z^2_{\delta,\sigma,\ep})$. Hence

\be\label{eqDw1Dw2}
Dw_1^\delta(z^1_{\delta,\sigma,\ep})+
  \left(\begin{array}{c} \sigma D\xi_1(p^1_{\delta,\sigma,\ep})\\ -\beta g' (y^1_{\delta,\sigma,\ep})- \sigma g' (y^1_{\delta,\sigma,\ep} - \hat{y}^1)\end{array} \right) = \frac{1}{\delta}(z^1_{\delta,\sigma,\ep}-z^2_{\delta,\sigma,\ep}) - a^1_\ep
\ee
\be\label{eqDw1Dw22}
-Dw_2^\delta(z^2_{\delta,\sigma,\ep})+
 \left(\begin{array}{c} 0\\ -\beta g' (y^2_{\delta,\sigma,\ep})- \sigma g' (y^2_{\delta,\sigma,\ep} - \hat{y}^1)\end{array} \right) = \frac{1}{\delta}(z^2_{\delta,\sigma,\ep}-z^1_{\delta,\sigma,\ep}) - a^2_\ep
\ee
while 
\be\label{ineqD2w1vsD2w2}
 \begin{pmatrix} S & 0 \\ 0 & S' \end{pmatrix}\leq \frac{1}{\delta} \begin{pmatrix}I & -I \\ -I & I \end{pmatrix} 
\ee
with 
\[ S:=D^2w_1^\delta(z^1_{\delta,\sigma,\ep}) + \left(\begin{array}{cc}  \sigma D^2 \xi_1(p^1_{\delta,\sigma,\ep})&0\\0&  -\beta g'' (y^1_{\delta,\sigma,\ep})- \sigma g'' (y_{\delta,\sigma,\ep} - \hat{y}^1)\end{array} \right)\]
\[ S':=-D^2w_{2,\delta}(z^2_{\delta,\sigma,\ep})+ \left(\begin{array}{cc} 0&0\\0&  -\beta g'' (y^2_{\delta,\sigma,\ep})- \sigma g'' (y^2_{\delta,\sigma,\ep} - \hat{y}^2)\end{array}\right)\]
This implies that $S \leq -S'$ (see \cite{userguide} p.19) and therefore 
\be \label{ineqconcave}
 D^2_p w_1^\delta(z^1_{\delta,\sigma,\ep}) \leq D^2_p w_{2,\delta}(z^2_{\delta,\sigma,\ep})- \sigma D^2 \xi_1(p^1_{\delta,\sigma,\ep}).
\ee
We now check that $\ds \lambda_{\max}(((p^1)_{\delta,\sigma,\ep}'),D^2_{p}w_1^\delta(z^1_{\delta,\sigma,\ep}))<0$. Using the definition of $w_{2,\delta}$, for all $p\in \RR^{K}$ and $p''\in \Delta(K)$,
\[w_{2,\delta}(p,y^2_{\delta,\sigma,\ep})\leq w_2(p'',(y^2)_{\delta,\sigma,\ep}'')+\frac{1}{2\delta}\left(|p-p''|^2+
|y^2_{\delta,\sigma,\ep}-(y^2)_{\delta,\sigma,\ep}''|^2\right),\]
with an equality at $(p, p'')= (p^2_{\delta,\sigma,\ep}, (p^2)_{\delta,\sigma,\ep}'')$. If $m\in T_{\Delta(K)}((p^2)_{\delta,\sigma,\ep}'')$ with $|m|$ small enough, taking $p:= p^2_{\delta,\sigma,\ep}+m$ and $p''= (p^2)_{\delta,\sigma,\ep}''+ m$ gives
\be
w_{2,\delta}(p^2_{\delta,\sigma,\ep}+m,y^2_{\delta,\sigma,\ep})\leq  w_2((p^2)_{\delta,\sigma,\ep}''+ m,(y^2)_{\delta,\sigma,\ep}'')+\frac{1}{2\delta}\left(|p^2_{\delta,\sigma,\ep}-(p^2)_{\delta,\sigma,\ep}''|^2+
|y^2_{\delta,\sigma,\ep}-(y^2)_{\delta,\sigma,\ep}''|^2\right),
\ee
with equality for $m=0$. As $w_2$ is concave with respect to the first variable (see e.g. Lemma 3.2 in \cite{cardadouble}), the above inequality implies that  $\ds \lambda_{\max}((p^2)_{\delta,\sigma,\ep}'',D^2_{p}w_{2,\delta}(z^2_{\delta,\sigma,\ep}))\leq 0$. In view of \eqref{ineqconcave} we get therefore
\[\ds \lambda_{\max}((p^1)_{\delta,\sigma,\ep}',D^2_{p}w_1^\delta(z^1_{\delta,\sigma,\ep})) \leq -\sigma \lambda_{\min}((p^1)_{\delta,\sigma,\ep}',D^2\xi_1(p^1_{\delta,\sigma,\ep}))<0,\]
because $D^2\xi_1>0$ by construction.
So \eqref{ineqw1delta}  becomes
\be\label{ineqw1deltaNew}
 rw_1(z^1_{\delta,\sigma,\ep})+H((z^1)'_{\delta,\sigma,\ep}),Dw_1^\delta(z^1_{\delta,\sigma,\ep}),D^2w_1^\delta(z^1_{\delta,\sigma,\ep})) \leq 0
\ee
We compute the difference of the two inequalities \eqref{ineqw1deltaNew} and \eqref{ineqw2delta}  above:
\begin{align*}
r(w_1^\delta(z^1_{\delta,\sigma,\ep})&-w_{2,\delta}(z^2_{\delta,\sigma,\ep})) 
+H((z^1)_{\delta,\sigma,\ep}',Dw_1^\delta(z^1_{\delta,\sigma,\ep}),D^2w_1^\delta(z^1_{\delta,\sigma,\ep})) \\ &-H((z^2)_{\delta,\sigma,\ep}'',Dw_{2,\delta}(z^2_{\delta,\sigma,\ep}),D^2w_{2,\delta}(z^2_{\delta,\sigma,\ep})) \leq 0,
\end{align*}
where, in view of \eqref{Dw1=Dw2=} and the definitions of $S,S'$ (and using that $g',g''$ and $|D\xi_1|$, $|D^2\xi_1|$ are bounded by $1$)
\begin{align*}
H((z^1)_{\delta,\sigma,\ep}',Dw_1^\delta(z^1_{\delta,\sigma,\ep}),D^2w_1^\delta(z^1_{\delta,\sigma,\ep})) \geq
H((z^1)_{\delta,\sigma,\ep}',\frac{1}{\delta}(z^1_{\delta,\sigma,\ep}-z^2_{\delta,\sigma,\ep}),S)  - C_1 (\beta+\varepsilon +\sigma), 
\end{align*}
\begin{align*}
H((z^2)_{\delta,\sigma,\ep}'',Dw_{2,\delta}(z^2_{\delta,\sigma,\ep}),D^2w_{2,\delta}(z^2_{\delta,\sigma,\ep}))\leq
H((z^2)''_{\delta,\sigma,\ep},\frac{1}{\delta}(z^1_{\delta,\sigma,\ep}-z^2_{\delta,\sigma,\ep}),-S') +C_1 (\beta + \varepsilon +\sigma), 
\end{align*}
Next, we have:
\begin{align*}
|u((z^1)_{\delta,\sigma,\ep}')-u((z^2)_{\delta,\sigma,\ep}'')| \leq C |(z^1)_{\delta,\sigma,\ep}'-(z^2)_{\delta,\sigma,\ep}''|,
\end{align*}
\begin{align*}
| \lg b((z^1)_{\delta,\sigma,\ep}'),\frac{1}{\delta}(z^1_{\delta,\sigma,\ep}-z^2_{\delta,\sigma,\ep}) \rg  - \lg b(z^2)_{\delta,\sigma,\ep}''), \frac{1}{\delta}(z^1_{\delta,\sigma,\ep}-z^2_{\delta,\sigma,\ep}) \rg  | \leq \frac{C}{\delta} |(z^1)_{\delta,\sigma,\ep}'-(z^2)_{\delta,\sigma,\ep}''||z^1_{\delta,\sigma,\ep}-z^2_{\delta,\sigma,\ep}|,
\end{align*}
\begin{align*}
 -\frac{1}{2}\lg \kappa((z^2)''_{\delta,\sigma,\ep}), -S'\kappa((z^2)''_{\delta,\sigma,\ep}) \rg \leq    -\frac{1}{2}\lg \kappa((z^1)_{\delta,\sigma,\ep}'),S\kappa((z^1)_{\delta,\sigma,\ep}') \rg + \frac{C}{\delta} |(z^1)_{\delta,\sigma,\ep}'-(z^2)_{\delta,\sigma,\ep}''|^2.
\end{align*}

We deduce that:
\begin{align*}
r(w_1^\delta(p_{\delta,\sigma,\ep})-w_{2,\delta}(p_{\delta,\sigma,\ep})) &\leq
 C\left(\frac{1}{\delta}\left|(z^1)_{\delta,\sigma,\ep}'-(z^2)_{\delta,\sigma,\ep}''\right|^2+\left|(z^1)_{\delta,\sigma,\ep}'-(z^2)_{\delta,\sigma,\ep}''\right| \right)\\
 & +\frac{C}{\delta} |(z^1)_{\delta,\sigma,\ep}'-(z^2)_{\delta,\sigma,\ep}''||z^1_{\delta,\sigma,\ep}-z^2_{\delta,\sigma,\ep}| +2C_1( \beta+\varepsilon +\sigma).
\end{align*}
As $\sigma$ and $\ep$ tend to $0$, the $z^1_{\delta,\sigma,\ep}$, $z^2_{\delta,\sigma,\ep}$,  $(z^1)_{\delta,\sigma,\ep}'$ and $(z^2)_{\delta,\sigma,\ep}''$ converges (up to a subsequence) to  $z^1_\delta$, $z^2_\delta$, $(z^1)_\delta'$ and $(z^2)_\delta''$, where $(z^1_\delta,z^2_{\delta})$ is a maximum in \eqref{Mdelta} and where $(z^1)_\delta'$ and $(z^2)_\delta''$ satisfy \eqref{pprimepseconde}. The above inequality together with the definition of $M_\delta$ implies:
\begin{align*}
rM_\delta \leq r(w_1^\delta(z^1_\delta)-w_{2,\delta}(z^2_\delta))\leq &
C\left(\frac{1}{\delta}\left|(z^1)_{\delta}'-(z^2)_{\delta}''\right|^2+\left|(z^1)_{\delta}'-(z^2)_{\delta}''\right| \right)\\
 & +\frac{C}{\delta} |(z^1)_{\delta}'-(z^2)_{\delta}''||z^1_{\delta}-z^2_{\delta}| +2C_1 \beta.
 \end{align*}
We finally let $\delta\to 0$: in view of Lemma \ref{lem:limpdelta} the above inequality yields to $rM'=\lim_{\delta\to 0}rM_\delta\leq 2C_1 \beta$, which contradicts our initial assumption. Therefore $w_1\leq w_2$ and the proof is complete.

\end{document}